\newcommand{\ncom}{\newcommand}
\ncom{\ul}{\underline}
\ncom{\beq}{\begin{equation}}
\ncom{\eeq}{\end{equation}}
\ncom{\bea}{\begin{eqnarray*}}
\ncom{\eea}{\end{eqnarray*}}
\ncom{\beqa}{\begin{eqnarray}}
\ncom{\eeqa}{\end{eqnarray}}
\ncom{\nno}{\nonumber}
\ncom{\non}{\nonumber}
\ncom{\ds}{\displaystyle}
\ncom{\half}{\frac{1}{2}}
\ncom{\mbx}{\makebox{.25cm}}
\ncom{\hs}{\mbox{\hspace{.25cm}}}
\ncom{\rar}{\rightarrow}
\ncom{\Rar}{\Rightarrow}
\ncom{\noin}{\noindent}
\ncom{\bc}{\begin{center}}
\ncom{\ec}{\end{center}}
\ncom{\sz}{\scriptsize}
\ncom{\rf}{\ref}
\ncom{\s}{\sqrt{2}}
\ncom{\sgm}{\sigma}
\ncom{\Sgm}{\Sigma}
\ncom{\psgm}{\sigma^{\prime}}
\ncom{\dt}{\delta}
\ncom{\Dt}{\Delta}
\ncom{\lmd}{\lambda}
\ncom{\Lmd}{\Lambda}
\ncom{\Th}{\Theta}
\ncom{\e}{\eta}
\ncom{\eps}{\epsilon}
\ncom{\pcc}{\stackrel{P}{>}}
\ncom{\lp}{\stackrel{L_{p}}{>}}
\ncom{\dist}{{\rm\,dist}}
\ncom{\sspan}{{\rm\,span}}
\ncom{\re}{{\rm Re\,}}
\ncom{\im}{{\rm Im\,}}
\ncom{\sgn}{{\rm sgn\,}}
\ncom{\ba}{\begin{array}}
\ncom{\ea}{\end{array}}
\ncom{\hone}{\mbox{\hspace{1em}}}
\ncom{\htwo}{\mbox{\hspace{2em}}}
\ncom{\hthree}{\mbox{\hspace{3em}}}
\ncom{\hfour}{\mbox{\hspace{4em}}}
\ncom{\vone}{\vskip 2ex}
\ncom{\vtwo}{\vskip 4ex}
\ncom{\vonee}{\vskip 1.5ex}
\ncom{\vthree}{\vskip 6ex}
\ncom{\vfour}{\vspace*{8ex}}
\ncom{\norm}{\|\;\;\|}
\ncom{\integ}[4]{\int_{#1}^{#2}\,{#3}\,d{#4}}
\ncom{\vspan}[1]{{{\rm\,span}\{ #1 \}}}
\ncom{\dm}[1]{ {\displaystyle{#1} } }
\ncom{\ri}[1]{{#1} \index{#1}}
\newtheorem{theorem}{\bf Theorem}[section]
\newtheorem{remark}{\bf Remark}[section]
\newtheorem{proposition}{Proposition}[section]
\newtheorem{lemma}{Lemma}[section]
\newtheorem{corollary}{Corollary}[section]
\newtheorem{definition}{Definition}[section]
\newtheoremstyle
    {remarkstyle}
    {}
    {11pt}
    {}
    {}
    {\bfseries}
    {:}
    {     }
    {\thmname{#1} \thmnumber{#2} }
\theoremstyle{remarkstyle}
\begin{document}

\newpage

\begin{center}
{\Large \bf Skellam Type Processes of Order K and Beyond}
\end{center}
\vone
\begin{center}
{Neha Gupta}$^{\textrm{a}}$, {Arun Kumar}$^{\textrm{a}}$, {Nikolai Leonenko}$^{\textrm{b}}$
\footnotesize{
		$$\begin{tabular}{l}
		$^{\textrm{a}}$ \emph{Department of Mathematics, Indian Institute of Technology Ropar, Rupnagar, Punjab - 140001, India}\\
		
$^{b}$Cardiff School of Mathematics, Cardiff University, Senghennydd Road,
Cardiff, CF24 4AG, UK

\end{tabular}$$}
\end{center}

\vtwo
\begin{center}
\noindent{\bf Abstract}
\end{center}
In this article, we introduce Skellam process of order $k$ and its running average. We also discuss the time-changed Skellam process of order $k$. In particular we discuss space-fractional Skellam process  and tempered space-fractional Skellam process via time changes in Poisson process by independent stable subordinator and tempered stable subordinator, respectively. We derive the marginal probabilities, L\'evy measures, governing difference-differential equations of the introduced processes. Our results generalize Skellam process and running average of Poisson process in several directions.\\

\noindent{\it Key words:} Skellam process, subordination, L\'evy measure, Poisson process of order $k$, running average.
    
\section{Introduction}
Skellam distribution is obtained by taking the difference between two independent Poisson distributed random variables which was introduced for the case of different intensities  $\lambda_1,\; \lambda_2$ by (see \cite{Skellam1946}) and for equal means in \cite{Irwin1937}. For large values of $\lambda_1+\lambda_2$,  the distribution can be approximated by the normal distribution and if $\lambda_2$ is very close to $0$, then the distribution tends to a Poisson distribution with intensity $\lambda_1$. Similarly, if $\lambda_1$ tends to 0, the distribution tends to a Poisson distribution with non-positive integer values. The Skellam random variable is infinitely divisible since it is the difference of two infinitely divisible random variables (see Prop. $2.1$ in \cite{Steutel2004}). Therefore, one can define a continuous time L\'evy process for Skellam distribution which is called Skellam process.

\noindent The Skellam process is the integer valued L\'evy process and can also be obtained by taking the difference of two independent Poisson processes which marginal probability mass funcion (PMF) involves the modified Bessel function of the first kind. Skellam process has various applications in different areas such as to model the intensity difference of pixels in cameras (see  \cite{Hwang2007}) and for modeling the difference of the number of goals of two competing teams in football game in \cite{Karlis2008}.
The model based on the difference of two point processes are proposed in  (see \cite{Bacry2013a, Bacry2013b, Barndorff2011, Carr2011}).

\noindent Recently, time-fractional Skellam processes have studied in \cite{Kerss2014} which is obtained by time-changing the Skellam process with an inverse stable subordinator. Further, they provided the application of time-fractional Skellam process in modeling of arrivals of jumps in high frequency trading data. It is shown that the inter arrival times between the positive and negative jumps follow Mittag-Leffler distribution rather then the exponential distribution. Similar observations are observed in case of Danish fire insurance data (see \cite{Kumar2019}). Buchak and Sakhno in \cite{Buchak2018} also have proposed the governing equations for time-fractional Skellam processes. Recently,  \cite{Ayushi2020} introduced time-changed Poisson process of order $k$, which is obtained by time changing the Poisson process of order $k$ (see \cite{Kostadinova2012}) by general subordinators.

\noindent In this paper we introduce Skellam process of order $k$ and its running average. We also discuss the time-changed Skellam process of order $k$. In particular we discuss space-fractional Skellam process  and tempered space-fractional Skellam process via time changes in Poisson process by independent stable subordiantor and tempered stable subordiantor, respectively. We obtain closed form expressions for the marginal distributions of the considered processes and other important properties. Skellam process is used to model the difference between the number of goals between two teams in a football match. Similarly, Skellam process of order $k$ can be used to model the difference between the number of points scored by two competing teams in a basketball match where $k=3.$

The remainder of this paper proceeds as follows: in Section $2$, we introduce  all the relevant definitions and results. We derive also the L\'evy density for space- and tempered space-fractional Poisson processes. In Section $3$, we introduce and study running average of Poisson process of order $k$. Section 4 is dedicated to Skellam process of order $k$. Section 5 deals with running average of Skellam process of order $k$. In Section 6, we discuss about the time-changed Skellam process of order $k$. In Section $7$, we determine the marginal PMF, governing equations for marginal PMF, L\'evy  densities and moment generating functions for space-fractional Skellam process and tempered space-fractional Skellam process.
\section{Preliminaries}
In this section, we collect  relevant definitions and some results on Skellam process, subordinators, space-fractional Poisson process and tempered space-fractional Poisson process. These results will be used to define the space-fractional Skellam processes and tempered  space-fractional Skellam processes.

\subsection{Skellam process}

In this section, we revisit the Skellam process and also provide a characterization of it. Let $S(t)$ be a Skellam process, such that
$$  S(t)= N_{1}(t)-N_{2}(t), \; t\geq0,$$
where $N_{1}(t)$ and $N_{2}(t)$ are two independent homogeneous Poisson processes with intensity $\lambda_{1} >0$ and $\lambda_2>0,$ respectively. The Skellam process is defined in \cite{Barndorff2011} and the distribution has been introduced and studied in  \cite{Skellam1946}, see also \cite{Irwin1937}. This process is symmetric only when $\lambda_1= \lambda_2$.
The PMF $s_{k}(t)=\mathbb{P}(S(t)=k)$ of $S(t)$ is given by (see e.g. \cite{Skellam1946, Kerss2014})
\begin{align}{\label{Skellam_PMF}}
s_{k}(t)=e^{-t(\lambda_1+\lambda_2)}{\left(\frac{\lambda_1}{\lambda_2}\right)}^{k/2}I_{|k|}(2t\sqrt{\lambda_1 \lambda_2}),\; k\in \mathbb{Z},
\end{align}
where $I_k$ is modified Bessel function of first kind (see \cite{Abramowitz1974}, p. $375$),
\begin{equation}\label{Modi_Bessel}
I_{k}(z)=\sum_{n=0}^{\infty}\frac{{(z/2)}^{2n+k}}{n!(n+k)!}.
\end{equation}
The PMF $s_{k}(t)$ satisfies the following differential difference equation (see \cite{Kerss2014})
\begin{equation}
\frac{d}{dt}s_{k}(t)= \lambda_{1}(s_{k-1}(t)-s_{k}(t))-\lambda_{2}(s_{k}(t)-s_{k+1}(t)),\;\; k\in \mathbb{Z},
\end{equation}
with initial conditions $s_{0}(0)=1$ and $s_{k}(0)=0, \;k\neq0$. The Skellam process is a L\'evy process, its L\'evy density $\nu_S$ is the linear combination of two Dirac delta function, $\nu_S(y)= \lambda_{1}\delta_{\{1\}}(y)+\lambda_{2}\delta_{\{-1\}}(y) $ and the corresponding L\'evy exponent is given by
$$
\phi_{S(1)}(\theta)=\int_{-\infty}^{\infty}(1-e^{-\theta y})\nu(y)dy.
$$
The moment generating function (MGF) of Skellam process is 
\begin{align}
    \mathbb{E}[e^{\theta S(t)}]=e^{-t(\lambda_{1}+\lambda_{2}-\lambda_{1}e^{\theta}-\lambda_{2}e^{-\theta})}, \; \theta \in \mathbb{R}.    
\end{align}
With the help of MGF, one can easily find the moments of Skellam process. In next result, we give a characterization of Skellam process, which is not available in literature as per our knowledge.
\begin{theorem}
Suppose an arrival process has the independent and stationary increments and also satisfies the following incremental condition, then the process is Skellam.
\[   
\mathbb{P}(S(t+\delta)=m|S(t) = n)=
         \begin{cases}
                  \lambda_1 \delta + o(\delta), & m >n,\;m= n+1;\\
                 
                  \lambda_2 \delta + o(\delta), & m< n,\; m= n-1;\\
                   1-\lambda_1 \delta-\lambda_2 \delta + o(\delta), & m=n;\\
                   0 &\quad {\rm otherwise.}\\
           \end{cases}
\]
\end{theorem}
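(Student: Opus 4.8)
The plan is to imitate the classical infinitesimal characterization of the Poisson process, adapted to the two-sided (signed) setting. Write $p_n(t)=\mathbb{P}(S(t)=n)$ for $n\in\mathbb{Z}$ and normalize the arrival process so that $S(0)=0$, i.e. $p_0(0)=1$ and $p_n(0)=0$ for $n\neq 0$; stochastic continuity at $0$ is already built into the incremental hypothesis. Since the increments are stationary and independent, $\mathbb{P}(S(t+\delta)=m\mid S(t)=n)=\mathbb{P}(S(\delta)=m-n)$, so the hypothesis is equivalent to $\mathbb{P}(S(\delta)=1)=\lambda_1\delta+o(\delta)$, $\mathbb{P}(S(\delta)=-1)=\lambda_2\delta+o(\delta)$, $\mathbb{P}(S(\delta)=0)=1-(\lambda_1+\lambda_2)\delta+o(\delta)$. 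Adding these three asymptotics shows that $\sum_{|j|\ge 2}\mathbb{P}(S(\delta)=j)=o(\delta)$ automatically, which is all that the ``otherwise'' clause will be used for.

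First I would condition on $S(t)$ and use independence of the increment $S(t+\delta)-S(t)$ to get
\[
p_n(t+\delta)=\sum_{j\in\mathbb{Z}}p_{n-j}(t)\,\mathbb{P}(S(\delta)=j)
=\lambda_1\delta\,p_{n-1}(t)+\lambda_2\delta\,p_{n+1}(t)+\left(1-(\lambda_1+\lambda_2)\delta\right)p_n(t)+o(\delta),
\]
where the $o(\delta)$ absorbs the tail $\sum_{|j|\ge 2}p_{n-j}(t)\,\mathbb{P}(S(\delta)=j)$ via the bound above (each $p_{n-j}(t)\le 1$). Subtracting $p_n(t)$, dividing by $\delta$ and letting $\delta\downarrow 0$ yields the forward system
\[
\frac{d}{dt}p_n(t)=\lambda_1\left(p_{n-1}(t)-p_n(t)\right)-\lambda_2\left(p_n(t)-p_{n+1}(t)\right),\qquad n\in\mathbb{Z},
\]
with the stated initial condition, which is precisely the governing differential-difference equation of the Skellam PMF recalled in Section~2.

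Next I would solve this system by introducing the generating function $G(u,t)=\sum_{n\in\mathbb{Z}}p_n(t)u^n$, convergent at least on $|u|=1$. The system collapses to the linear ODE $\partial_t G(u,t)=\left(\lambda_1 u+\lambda_2 u^{-1}-\lambda_1-\lambda_2\right)G(u,t)$ with $G(u,0)=1$, hence $G(u,t)=\exp\{t(\lambda_1 u+\lambda_2 u^{-1}-\lambda_1-\lambda_2)\}$. Taking $u=e^{i\theta}$ recovers the Skellam characteristic function (equivalently the MGF displayed in Section~2); expanding $G$ after the rescaling $v=\sqrt{\lambda_1/\lambda_2}\,u$ and using the generating-function identity for the modified Bessel functions $I_{|n|}$ associated with the series \eqref{Modi_Bessel} identifies the coefficients as $p_n(t)=e^{-t(\lambda_1+\lambda_2)}(\lambda_1/\lambda_2)^{n/2}I_{|n|}(2t\sqrt{\lambda_1\lambda_2})$, i.e. the Skellam PMF \eqref{Skellam_PMF}. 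Since $S$ has stationary independent increments with this one-dimensional marginal, its finite-dimensional distributions coincide with those of the Skellam process, which gives the claim.

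The main obstacle is analytic bookkeeping rather than algebra: one must justify (i) discarding the tail $\sum_{|j|\ge 2}$ as $o(\delta)$, which follows by summing the three given asymptotics, and (ii) termwise differentiation of the series for $G$ and the interchange of limit and infinite sum in passing to the ODE. A clean way to bypass (ii) is to work from the start with $\psi(\theta,t)=\mathbb{E}[e^{i\theta S(t)}]$: stationary independent increments give $\psi(\theta,t+s)=\psi(\theta,t)\psi(\theta,s)$, so $\psi(\theta,t)=e^{t\eta(\theta)}$, and the incremental hypothesis yields $\eta(\theta)=\lim_{\delta\downarrow 0}\delta^{-1}(\psi(\theta,\delta)-1)=\lambda_1(e^{i\theta}-1)+\lambda_2(e^{-i\theta}-1)$, where now $|e^{i\theta j}|=1$ makes the tail estimate immediate; uniqueness of characteristic functions then finishes the proof.
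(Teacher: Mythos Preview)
Your argument is correct, but it follows a different route from the paper. The paper proceeds combinatorially: it partitions $[0,t]$ into $n$ subintervals of length $\delta=t/n$, uses the incremental hypothesis together with independence of increments to write $\mathbb{P}(S(t)=k)$ as a trinomial sum (choosing $m+k$ subintervals to carry a $+1$ jump, $m$ to carry a $-1$ jump, and the rest no jump), and then lets $n\to\infty$ so that the sum collapses directly to the modified Bessel series $e^{-(\lambda_1+\lambda_2)t}\sum_{m\ge 0}(\lambda_1 t)^{m+k}(\lambda_2 t)^m/(m!(m+k)!)$. By contrast, you derive the forward Kolmogorov system and solve it via the bilateral generating function (or, more cleanly, via the multiplicativity of the characteristic function). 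Your approach has the advantage of plugging straight into the differential--difference equation for $s_k(t)$ already recorded in Section~2, and your characteristic-function variant handles all the analytic bookkeeping (tail bounds, interchange of limits) transparently. The paper's discretization is more elementary and yields the Bessel form in one stroke, but it leaves the $o(\delta)$ contributions and the passage to the $n\to\infty$ limit largely implicit.
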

\begin{proof}
Consider the interval [0,t] which is discretized with $n$ sub-intervals of size $\delta$ each such that $n\delta =t.$ For $k\geq 0$, we have
\begin{align*}
\mathbb{P}(S(0,t) = k) &= \sum_{m=0}^{[\frac{n-k}{2}]} \frac{n!}{m!(m+k)!(n-2m-k)!}(\lambda_1\delta)^{m+k}(\lambda_2\delta)^{m}(1-\lambda_1\delta-\lambda_2\delta)^{n-2m-k} \\
&= \sum_{m=0}^{[\frac{n-k}{2}]} \frac{n!}{m!(m+k)!(n-2m-k)!} \left(\frac{\lambda_1 t}{n}\right)^{m+k}\left(\frac{\lambda_2 t}{n} \right)^{m}\left(1-\frac{\lambda_1 t}{n} -\frac{\lambda_2 t}{n}\right)^{n-2m-k}\\
&= \sum_{m=0}^{[\frac{n-k}{2}]} \frac{(\lambda_1 t)^{m+k}(\lambda_2 t)^{m}}{m!(m+k)!} \frac{n!}{(n-2m-k)!n^{2m+k}}\left(1-\frac{\lambda_1 t}{n} -\frac{\lambda_2 t}{n}\right)^{n-2m-k}\\
&= e^{-(\lambda_1+\lambda_2)t} \sum_{m=0}^{\infty}\frac{(\lambda_1 t)^{m+k}(\lambda_2 t)^{m}}{m!(m+k)!},
\end{align*}
by taking $n\rightarrow \infty.$ The result follows now by using the definition of modified Bessel function of first kind $I_k$. Similarly, we prove when $k<0.$
\end{proof}

\subsection{Poisson process of order $k$ (PPoK)} 

In this section, we recall the definition and some important properties of Poisson process of order k (PPoK). Kostadinova and Minkova  (see \cite{Kostadinova2012}) introduced and studied the PPok. Let $ x_1, x_2, \cdots, x_k $ be non-negative integers and $\zeta_{k}  = x_1 + x_2 + \dots + x_k, \; \Pi_{k}! = x_1!x_2!\dots x_k! $ and
\begin{equation}
    \Omega(k,n) = \{ X=(x_1,\ x_2,\ \dots,\ x_k) | x_1 + 2x_2+ \dots + kx_k=n\}.
\end{equation}
Also, let $\{N^{k}(t)\}_{t\geq 0},$ represent the PPok with rate parameter $\lambda t$, then probability mass function (pmf) is given by
\begin{equation}\label{pmf_ppok}
    p_{n}^{N^{k}}(t)=\mathbb{P}(N^{k}(t) = n) = \sum_{X=\Omega(k,n)} e^{-k\lambda t} \frac{(\lambda t)^{\zeta_{k}}}{\Pi_{k}!}.
\end{equation}
The pmf of $N^{k}(t)$ satisfies the following differential-difference equations (see \cite{Kostadinova2012})
\begin{align}\label{SFPP}
 \frac{d}{dt}p_{n}^{N^{k}}(t) &= -k \lambda p_{n}^{N^{k}}(t)+\lambda\sum_{j=1}^{n\wedge k}p_{n-k}^{N^{k}}(t),\;\; n=1,2,\ldots \nonumber \\
\frac{d}{dt}p_{0}^{N^{k}}(t)& = -k \lambda p_{0}^{N^{k}}(t),
\end{align}
with initial condition $p_{0}^{N^{k}}(0) = 1$ and $p_{n}^{N^{k}}(0) = 0$ and $n\wedge k = \min\{k,n\}$. The characteristic function of PPoK $N^{k}(t)$
 \begin{equation}\label{char_ppok}
     \phi_{N^{k}(t)}(u)= \mathbb{E}(e^{iuN^{k}(t)})= e^{- \lambda t (k- \sum_{j=1}^{k}e^{iuj})},
 \end{equation}
 where $i=\sqrt{-1}$. The process PPoK is L\'evy, so it is infinite divisible i.e. $\phi_{N^{k}(t)}(u) = (\phi_{N^{k}(1)}(u))^{t}.$ The L\'evy measure for PPoK is easy to drive and is given by
$$
\nu_{N^k}(x) = \lambda \sum_{j=1}^{k}\delta_j(x),
$$
where $\delta_j$ is the Dirac delta function concentrated at $j$.
 The transition probability  of the PPoK $\{N^{k}(t)\}_{t\geq 0}$ are also given by Kostadinova and Minkova \cite{Kostadinova2012},
\begin{equation}
\mathbb{P}(N^{k}(t+\delta)=m|N^{k}(t) = n)=
         \begin{cases}
                  1-k\lambda \delta, & m=n;\\
                  \lambda \delta & m= n+i, i=1,2, \ldots,k;\\
                  0 & \quad{\rm otherwise.}\\
           \end{cases}
\end{equation}
The probability generating function (pgf) $G^{N^{k}}(s,t)$  is given by (see \cite{Kostadinova2012})  
\begin{equation}\label{pgf_ppok}
   G^{N^{k}}(s,t) = e^{-\lambda t(k-\sum_{j=1}^{k} s^{j})}.
\end{equation}
The mean, variance and covariance function of the PPoK are given by
\begin{align}
    \mathbb{E}[N^{k}(t)] &= \frac{k(k+1)}{2}\lambda t; \nonumber \\
    {\rm Var}[N^{k}(t)]& = \frac{k(k+1)(2k+1)}{6}\lambda t; \nonumber\\
    {\rm Cov}[N^{k}(t), N^{k}(s)]& =  \frac{k(k+1)(2k+1)}{6} \lambda (t\wedge s).
\end{align}

\subsection{Subordinators}
Let $D_{f}(t)$ be real valued  L\'evy process with non-decreasing sample paths and its Laplace transform has the form
$$ \mathbb{E}[e^{-s D_{f}(t)}] = e^{-tf(s)}, $$
where
$$ f(s) = bs + \int_{0}^{\infty} (1-e^{x s}) \nu(dx),\;\; s>0,\; b\geq0,$$
is the integral representation of  Bernstein functions (see \cite{Schilling2010}).
The Bernstein functions are $C^{\infty}$, non-negative and such that $(-1)^{m}\frac{d^{m}}{dx^m}f(x) \leq 0$ for $m\geq 1$ in \cite{Schilling2010}. Here $\nu$ denote the non-negative L\'evy measure on the positive half line such that
$$ \int_{0}^{\infty}(x \wedge 1)  \nu(dx) < \infty,  \; \; \nu([0, \infty)) = \infty,
$$
and b is the drift coefficient. The right continuous inverse $E_{f}(t) = \inf\{u\geq0 : D_{f}(u) >t\}$ is the inverse and first exist time of $D_{f}(t)$, which is non-Markovian with non-stationary and non-independent increments. Next, we analyze some special cases of L\'evy subordinators with drift coefficient b = 0, that is,
\begin{equation}\label{Levy_exponent}
f(s) =
         \begin{cases}
                  p\log(1+\frac{s}{\alpha}),\; p>0,\ \alpha >0, &$(gamma subordinator)$;\\
                  (s+\mu)^{\alpha}-\mu^{\alpha},\;\mu >0,\; 0<\alpha<1,&$(tempered $\alpha$-stable  subordinator)$;\\
                   \delta(\sqrt{2s+\gamma^2}-\gamma),\; \gamma>0,\; \delta>0, & $(inverse Gaussian subordinator)$;\\
									s^{\alpha},\; 0<\alpha<1, & $(stable subordinator)$.
           \end{cases}
\end{equation}
It is worth to note that among the subordiantors given in \eqref{Levy_exponent}, all the integer order moments of stable subordiantor are infinite.

\subsection{The space-fractional Poisson process}
In this section, we discuss main properties of space-fractional Poisson process (SFPP). We also provide the L\'evy density for SFPP which is not discussed in the literature. The SFPP $N_{\alpha}(t)$ was introduced by (see \cite{Orsingher2012}), as follows
\begin{equation}
 N_{\alpha}(t) =
          \begin{cases}
                  N(D_{\alpha}(t)),\;t\geq 0, & 0<\alpha<1,\\
                  N(t),\;t\geq0, & \alpha=1.
           \end{cases}
\end{equation}
The probability generating function (PGF) of this process is of the form
\begin{align}\label{PGF of space}
G^{\alpha}(u,t)=\mathbb{E}u^{N_{\alpha}(t)}=e^{{-\lambda}^{\alpha}(1-u)^{\alpha}t},\; |u|\leq1, \; \alpha\in(0,1).
\end{align}  
The PMF of SFPP  is 
\begin{align}\label{space-fractional-PMF}
P^{\alpha}(k,t) =\mathbb{P}\{N_{\alpha}(t)=k\} &=\frac{(-1)^k}{k!}\sum_{r=0}^{\infty}\frac{(-\lambda^\alpha)^{r} t^r}{r!}\frac{\Gamma(r\alpha+1)}{\Gamma(r\alpha-k+1)}\nonumber\\
& =\frac{(-1)^k}{k!} {}_{1}\psi_{1} \left[\begin{matrix}
 (1, \alpha); \\
  (1-k, \alpha); 
 \end{matrix}(-\lambda^\alpha t) \right],
\end{align}
where ${}_{h}\psi_{i}(z)$ is the Fox Wright function (see formula $(1.11.14)$ in \cite{Kilbas2006}).
It was shown in \cite{Orsingher2012} that the PMF of the SFPP satisfies the following fractional differential-difference equations
\begin{align}\label{SFPP}
 \frac{d}{dt}P^{\alpha}(k,t) &= -\lambda^\alpha (1-B)^\alpha P^{\alpha}(k,t),\;\;  \alpha\in(0,1],\; k=1,2,\ldots \\
\frac{d}{dt}P^{\alpha}(0,t)& = -\lambda^\alpha P^{\alpha}(0,t),
\end{align}
with initial conditions            
\begin{equation}\label{intial condition}
P(k,0) =\delta_{k}(0)=
          \begin{cases}
                  0, & k\neq0,\\
                  1, & k=0.
           \end{cases}
\end{equation}
The fractional difference operator 
\begin{equation}\label{Backward_Operator}
(1-B)^\alpha = \sum_{j=0}^{\infty}{\alpha \choose j}(-1)^jB^{j}
\end{equation}
is defined in \cite{Beran1994}, where $B$ is the backward shift operator. The characteristic function of SFPP is
\begin{equation}\label{charc_SFPP}
 \mathbb{E}[e^{i\theta N_{\alpha}(t)}] = e^{-\lambda^{\alpha}(1-e^{i\theta})^{\alpha}t}.
\end{equation}

\begin{proposition}
The L\'evy density $\nu_{N_{\alpha}}(x)$ of SFPP is given by
\begin{equation}\label{space_levy}
\nu_{N_{\alpha}}(x) = \lambda^{\alpha}\sum^{\infty}_{n=1}(-1)^{n+1} {\alpha \choose n} \delta_{n}(x).
\end{equation}
\end{proposition}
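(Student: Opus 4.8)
The plan is to read off the Lévy density from the characteristic exponent. From \eqref{charc_SFPP} the Lévy (characteristic) exponent of the SFPP is
\[
\psi_{N_\alpha}(\theta) = \lambda^\alpha\bigl(1-e^{i\theta}\bigr)^\alpha,
\]
so that $\mathbb{E}[e^{i\theta N_\alpha(t)}] = e^{-t\psi_{N_\alpha}(\theta)}$. Since $N_\alpha$ is a subordinated Poisson process (hence a pure-jump Lévy process with no Gaussian part and no drift), the Lévy--Khintchine representation reduces to
\[
\psi_{N_\alpha}(\theta) = \int_{\mathbb{R}}\bigl(1-e^{i\theta x}\bigr)\,\nu_{N_\alpha}(dx),
\]
and it suffices to exhibit a measure $\nu_{N_\alpha}$ supported on the positive integers for which the right-hand side equals $\lambda^\alpha(1-e^{i\theta})^\alpha$.

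The key step is the binomial expansion. Using the generalized binomial series, valid since $|e^{i\theta}|=1$,
\[
\bigl(1-e^{i\theta}\bigr)^\alpha = \sum_{n=0}^{\infty}\binom{\alpha}{n}(-1)^n e^{in\theta}
= 1 + \sum_{n=1}^{\infty}(-1)^n\binom{\alpha}{n} e^{in\theta},
\]
and since $\sum_{n=0}^\infty \binom{\alpha}{n}(-1)^n = (1-1)^\alpha = 0$ for $\alpha>0$, the constant term $1$ equals $-\sum_{n=1}^\infty (-1)^n\binom{\alpha}{n}$. Hence
\[
\bigl(1-e^{i\theta}\bigr)^\alpha = \sum_{n=1}^{\infty}(-1)^{n+1}\binom{\alpha}{n}\bigl(1 - e^{in\theta}\bigr).
\]
Multiplying by $\lambda^\alpha$ and comparing with the integral representation of $\psi_{N_\alpha}$ against $\nu_{N_\alpha}(dx)=\sum_{n\ge1}\lambda^\alpha(-1)^{n+1}\binom{\alpha}{n}\delta_n(dx)$ gives exactly \eqref{space_levy}; uniqueness of the Lévy measure then finishes the proof. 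One should also note $(-1)^{n+1}\binom{\alpha}{n}\ge 0$ for $n\ge1$ when $0<\alpha<1$, so $\nu_{N_\alpha}$ is a genuine (non-negative) measure, and that $\sum_{n\ge1}(-1)^{n+1}\binom{\alpha}{n}=1<\infty$ together with the fact that jumps are bounded below by $1$ makes it a finite Lévy measure, consistent with $N_\alpha$ being a compound-Poisson-type process.

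The main obstacle is simply the justification of rearranging the binomial series so as to split $(1-e^{i\theta})^\alpha$ into the $\sum (1-e^{in\theta})$ form: one must check absolute convergence of $\sum_{n\ge1}\bigl|\binom{\alpha}{n}\bigr|$ (which holds for $\alpha\in(0,1)$, since $\binom{\alpha}{n} = O(n^{-\alpha-1})$) in order to legitimately peel off the constant term and reindex. Everything else is a direct term-by-term comparison, using that a Lévy process with bounded-below jumps has its Lévy measure uniquely determined by its characteristic exponent. Alternatively, and perhaps more cleanly, one can bypass the subtraction trick by recalling the known representation of the PGF in \eqref{PGF of space}, writing $G^\alpha(u,t) = \exp\!\bigl(-t\lambda^\alpha(1-u)^\alpha\bigr)$ and expanding $\lambda^\alpha(1-u)^\alpha = \lambda^\alpha\sum_{n\ge0}\binom{\alpha}{n}(-u)^n$, then matching against the general pure-jump PGF $\exp\bigl(-t\int (1-u^x)\nu(dx)\bigr) = \exp\bigl(-t\sum_n \nu(\{n\})(1-u^n)\bigr)$; this is the same computation phrased in the variable $u=e^{i\theta}$.
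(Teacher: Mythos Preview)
Your proof is correct and follows essentially the same route as the paper: both arguments hinge on the generalized binomial expansion of $(1-e^{i\theta})^\alpha$ together with the identity $\sum_{n\ge 0}(-1)^n\binom{\alpha}{n}=0$, and then match against the L\'evy--Khintchine representation. The only cosmetic difference is direction---the paper starts from the claimed measure and computes the characteristic exponent, whereas you start from the exponent and extract the measure---and you add the (useful but omitted in the paper) checks on non-negativity and absolute summability of the coefficients.
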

\begin{proof} We use L\'evy-Khintchine formula (see \cite{Sato1999}),
\begin{align*}
\int_{{\{ 0 \}}^{c}}&(e^{i\theta x}-1)\lambda^{\alpha}\sum^{\infty}_{n=1}(-1)^{n+1} {\alpha \choose n} \delta_{n}(x)  dx \\
&=\lambda^{\alpha}\left[ \sum^{\infty}_{n=1}(-1)^{n+1} {\alpha \choose n} e^{i\theta n}+\sum_{n=0}^{\infty}(-1)^{n} {\alpha \choose n} -1 \right]\\
&= \lambda^{\alpha} \sum^{\infty}_{n=0}(-1)^{n+1} {\alpha \choose n} e^{i\theta n} =  -\lambda^{\alpha}(1-e^{i\theta})^{\alpha},
\end{align*}
which is the characteristic exponent of SFPP from equation \eqref{charc_SFPP}.
\end{proof}

\subsection{Tempered space-fractional Poisson  process}
The tempered space-fractional Poisson process (TSFPP) can be obtained by subordinating homogeneous Poisson process $N(t)$ with the independent tempered stable subordiantor $D_{\alpha, \mu}(t)$ (see \cite{Gupta2020})
\begin{equation}
N_{\alpha, \mu}(t) = N(D_{\alpha, \mu}(t)),\; \alpha \in(0,1),\; \mu > 0.
\end{equation}
This process have finite integer order moments due to the tempered $\alpha$-stable subordinator. The PMF of TSFPP is given by (see \cite{Gupta2020})
\begin{align}{\label{pmf_tem_space}}
P^{\alpha, \mu}(k,t) & = (-1)^k e^{t\mu^{\alpha}} \sum_{m=0}^{\infty}\mu^m\sum_{r=0}^{\infty}\frac{(-t)^r}{r!}\lambda^{\alpha r-m}{\alpha r \choose m} {\alpha r -m \choose k}\nonumber\\
&=e^{t\mu^{\alpha}}\frac{(-1)^k}{k!}\sum_{m=0}^{\infty}\frac{\mu^m \lambda^{-m}}{m!} {}_{1}\psi_{1} \left[\begin{matrix}
 (1, \alpha); \\
  (1-k-m, \alpha); 
 \end{matrix}(-\lambda^\alpha t) \right]
,\;k=0,1,\ldots.
\end{align}
The governing difference-differential equation is given by
\begin{equation}\label{tempered-SFPP}
\frac{d}{dt}P^{\alpha, \mu}(k,t) = - ((\mu + \lambda(1-B))^{\alpha} - \mu^{\alpha}) P^{\alpha, \mu}(k,t).
\end{equation}
The characteristic function of TSFPP,
\begin{equation}\label{tempe_char}
\mathbb{E}[e^{i\theta N_{\alpha, \mu}(t) }] = e^{- t((\mu + \lambda(1-e^{i\theta}))^{\alpha} - \mu^{\alpha})}.
\end{equation}
Using a standard conditioning argument, the mean and variance of TSFPP are given by
\begin{align}\label{vari_tem}
\mathbb{E}(N_{\alpha,\mu}(t)) =  \lambda \alpha \mu^{\alpha-1}t,\;\;
{\rm Var}(N_{\alpha,\mu}(t)) = \lambda \alpha \mu^{\alpha-1}t + \lambda^2 \alpha(1-\alpha) \mu^{\alpha-2}t.
\end{align}
\begin{proposition}
The L\'evy density $\nu_{N_{\alpha, \mu}}(x)$ of TSFPP is
\begin{equation}\label{levy_temp}
\nu_{N_{\alpha, \mu}}(x) = \sum_{n=1}^{\infty}\mu^{\alpha-n}{\alpha \choose n}\lambda^{n} \sum_{l=1}^{n} {n \choose l}(-1)^{l+1} \delta_{l}(x),\; \mu >  0.
\end{equation}
\end{proposition}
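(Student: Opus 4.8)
The plan is to verify the stated expression through the L\'evy--Khintchine representation, exactly as in the proof of the SFPP case: since $N_{\alpha,\mu}(t)$ is an integer-valued L\'evy process whose characteristic function is given by \eqref{tempe_char}, by uniqueness of the L\'evy triplet it suffices to show that feeding the claimed density into the L\'evy--Khintchine integral reproduces the characteristic exponent $-\bigl((\mu+\lambda(1-e^{i\theta}))^{\alpha}-\mu^{\alpha}\bigr)$. That is, I would compute $\int_{\{0\}^{c}}(e^{i\theta x}-1)\,\nu_{N_{\alpha,\mu}}(x)\,dx$ and check it equals this exponent.

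Carrying this out, integrating term by term against the Dirac masses $\delta_{l}$ collapses the integral to the double series
\[
\sum_{n=1}^{\infty}\mu^{\alpha-n}{\alpha\choose n}\lambda^{n}\sum_{l=1}^{n}{n\choose l}(-1)^{l+1}\bigl(e^{i\theta l}-1\bigr).
\]
For each fixed $n$ the inner finite sum is evaluated with the ordinary binomial theorem: $\sum_{l=1}^{n}{n\choose l}(-1)^{l+1}e^{i\theta l}=1-(1-e^{i\theta})^{n}$ and $\sum_{l=1}^{n}{n\choose l}(-1)^{l+1}=1$, so the inner sum equals $-(1-e^{i\theta})^{n}$, and the whole expression reduces to $-\sum_{n=1}^{\infty}{\alpha\choose n}\mu^{\alpha-n}\bigl(\lambda(1-e^{i\theta})\bigr)^{n}$. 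Recognising this as a generalised binomial expansion, $\sum_{n=0}^{\infty}{\alpha\choose n}\mu^{\alpha-n}y^{n}=(\mu+y)^{\alpha}$ with $y=\lambda(1-e^{i\theta})$, and subtracting off the $n=0$ term $\mu^{\alpha}$, the integral becomes $-\bigl((\mu+\lambda(1-e^{i\theta}))^{\alpha}-\mu^{\alpha}\bigr)$, which matches \eqref{tempe_char} and completes the verification.

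The step needing the most care is the justification of the term-by-term integration together with the binomial expansion, since the latter requires $|\lambda(1-e^{i\theta})|\le\mu$, i.e. essentially $\mu\ge 2\lambda$; equivalently one checks absolute convergence of $\sum_{n}\bigl|{\alpha\choose n}\bigr|(2\lambda/\mu)^{n}$, which holds because $\bigl|{\alpha\choose n}\bigr|\asymp n^{-1-\alpha}$ for $0<\alpha<1$, and on that range Fubini permits interchanging $\sum_{n}$ with integration against $\sum_{l}\delta_{l}$; the general statement then follows by analytic continuation in $\theta$. Alternatively and more cleanly, one can reindex the stated double sum using ${\alpha\choose n}{n\choose l}={\alpha\choose l}{\alpha-l\choose n-l}$ to obtain the single sum $\sum_{l\ge1}(-1)^{l+1}{\alpha\choose l}\lambda^{l}(\mu+\lambda)^{\alpha-l}\delta_{l}(x)$, and then confirm this equals the coefficient $\nu_{N_{\alpha,\mu}}(\{l\})=\frac{\alpha\lambda^{l}\Gamma(l-\alpha)}{l!\,\Gamma(1-\alpha)(\lambda+\mu)^{l-\alpha}}$ obtained directly from the subordination formula $\nu_{N_{\alpha,\mu}}(dx)=\int_{0}^{\infty}\mathbb{P}(N(s)\in dx)\,\nu_{D_{\alpha,\mu}}(ds)$ (the tempered stable subordinator having zero drift) via a Gamma integral.
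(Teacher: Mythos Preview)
Your proof is correct and follows essentially the same route as the paper: verify the claimed density via the L\'evy--Khintchine formula by integrating against the Dirac masses and collapsing the double sum with the binomial theorem to recover the characteristic exponent in \eqref{tempe_char}. Your treatment is in fact more careful than the paper's, which performs the same manipulation but does not discuss the convergence radius of the generalised binomial series; your remarks on the restriction $\mu\ge 2\lambda$, analytic continuation, and the alternative reindexing via the subordination formula are additions beyond what the paper provides.
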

\begin{proof} Using \eqref{tempe_char}, the characteristic exponent of TSFPP is given by $F(\theta)=((\mu + \lambda(1-e^{i\theta}))^{\alpha} - \mu^{\alpha})$ .  We find the L\'evy density with the help of  L\'evy-Khintchine formula (see \cite{Sato1999}),
\begin{align*}
\int_{{\{ 0 \}}^{c}}&(e^{i\theta x}-1)\sum_{n=1}^{\infty}\mu^{\alpha-n}{\alpha \choose n}\lambda^{n} \sum_{l=1}^{n} {n \choose l}(-1)^{l+1} \delta_{l}(x) dx \\
&=\sum_{n=1}^{\infty}\mu^{\alpha-n}{\alpha \choose n}\lambda^{n} \left(\sum_{l=1}^{n} {n \choose l}(-1)^{l+1}e^{i\theta x}-\sum_{l=1}^{n} {n \choose l}(-1)^{l+1} \right)\\
&= \sum_{n=0}^{\infty}\mu^{\alpha-n}{\alpha \choose n}\lambda^{n} \sum_{l=0}^{n} {n \choose l}(-1)^{l+1} \delta_{l}(x)- \mu^{\alpha}\\
& =  -((\mu + \lambda(1-e^{i\theta}))^{\alpha} - \mu^{\alpha}),
\end{align*}
hence proved.
\end{proof}

\section{Running average of PPoK}
In this section, first we introduced the running average of PPoK and their main properties. These results will be used further to discuss the running average of SPoK. 
\begin{definition}[Running average of PPoK] 
We define the average process $N^{k}_{A}(t)$ by taking time-scaled integral of the path of the PPoK,
\begin{align}
    N^{k}_{A}(t) = \frac{1}{t}\int_{0}^{t}{N^{k}(s) ds}.
\end{align}
\end{definition}
\noindent We can write the differential equation with initial condition $N^{k}_{A}(0) =0$,
$$
\frac{d}{dt}(N^{k}_{A}(t))=\frac{1}{t}N^{k}(t) - \frac{1}{t^2}\int_{0}^{t}{N^{k}(s) ds}.
$$
Which shows that it has continuous sample paths of bounded total variation. We explored the compound Poisson representation and distribution properties of running average of PPoK. The characteristic of $N^{k}_{A}(t)$ is obtained by using the Lemma 1 of (see \cite{Xia2018}).
\begin{lemma}
If $X_{t}$ is a L\'evy process and $Y_{t}$ its Riemann integral defined by
\begin{align*}
    Y_{t} = \int_{0}^{t}{X_{s} ds},
\end{align*}
then the characteristic functions of $X$ and $Y$ satisfy
\begin{align}
    \phi_{Y(t)}(u) = \mathbb{E}[e^{iuY(t)}] = \exp\left( t \int_{0}^{1}\log{\phi_{X(1)}(tuz)} dz\right),\; u\in \mathbb{R}.
\end{align}
\end{lemma}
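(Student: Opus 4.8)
The plan is to approximate $Y(t)$ by Riemann sums along a uniform partition of $[0,t]$, to compute the characteristic function of each approximant by exploiting the stationary and independent increments of $X$, and then to pass to the limit. The one preliminary observation I would lean on throughout is that, $X$ being a L\'evy process, $\phi_{X(s)}(v)=(\phi_{X(1)}(v))^{s}=e^{s\eta(v)}$ for all $s\geq 0$, where $\eta(v)=\log\phi_{X(1)}(v)$ is the continuous characteristic exponent furnished by the L\'evy--Khintchine formula; in particular $\phi_{X(1)}$ is zero-free, so $\log\phi_{X(1)}$ is unambiguous and continuous.

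For fixed $t>0$ and $n\geq 1$ I would set $Y_n=\frac{t}{n}\sum_{j=1}^{n}X(jt/n)$. Since a L\'evy process has right-continuous trajectories with left limits, almost every path is Riemann integrable on $[0,t]$, so $Y_n\to Y(t)$ almost surely, hence in distribution, and therefore $\phi_{Y_n}(u)\to\phi_{Y(t)}(u)$ by bounded convergence. (For the finite-variance processes to which this lemma is applied in the paper one may instead verify $Y_n\to Y(t)$ in $L^2$, which likewise suffices to pass the characteristic functions to the limit.)

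Next I would decompose $X(jt/n)=\sum_{i=1}^{j}\xi_i$ with $\xi_i=X(it/n)-X((i-1)t/n)$ i.i.d.\ and $\mathbb{E}[e^{iv\xi_1}]=\phi_{X(1)}(v)^{t/n}$, and interchange the order of summation to get $Y_n=\frac{t}{n}\sum_{i=1}^{n}(n-i+1)\,\xi_i$. Independence then gives
$$\phi_{Y_n}(u)=\prod_{i=1}^{n}\phi_{X(1)}\!\left(\tfrac{t(n-i+1)}{n}\,u\right)^{t/n},$$
so that, writing $z_{i,n}=(n-i+1)/n$,
$$\log\phi_{Y_n}(u)=\frac{t}{n}\sum_{i=1}^{n}\eta\!\left(u\,t\,z_{i,n}\right).$$
The right-hand side is $t$ times a Riemann sum for $\int_{0}^{1}\eta(utz)\,dz$ over the uniform partition $\{z_{i,n}\}_{i=1}^{n}$ of $(0,1]$; since $z\mapsto\eta(utz)$ is continuous on the compact $[0,1]$, letting $n\to\infty$ gives $\log\phi_{Y_n}(u)\to t\int_{0}^{1}\log\phi_{X(1)}(utz)\,dz$, and comparing this with $\phi_{Y_n}(u)\to\phi_{Y(t)}(u)$ from the previous paragraph yields the stated identity.

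The main thing to watch is the logarithm of the characteristic function: for a generic distribution this would be obstructed by zeros of $\phi$ and by the choice of branch, and the whole point is that the L\'evy property disposes of both, so the identity is really a statement about the continuous characteristic exponent $\eta$ and its convergence is an elementary Riemann-sum limit. The only other item worth recording explicitly is the almost-sure convergence $Y_n\to Y(t)$, which rests on the path regularity (right-continuity with left limits, hence Riemann integrability) of L\'evy trajectories.
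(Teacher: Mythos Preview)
Your argument is correct. The Riemann-sum approximation, the Abel rearrangement $Y_n=\frac{t}{n}\sum_{i=1}^{n}(n-i+1)\xi_i$, and the identification of $\frac{1}{n}\sum_{i=1}^{n}\eta(utz_{i,n})$ as a Riemann sum for $\int_0^1\eta(utz)\,dz$ are all sound; your remarks on why $\log\phi_{X(1)}$ is well defined and continuous, and on why the c\`adl\`ag path is Riemann integrable so that $Y_n\to Y(t)$ almost surely, close the gaps that would otherwise need justification.

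As for comparison with the paper: there is nothing to compare. The paper does not prove this lemma at all; it merely quotes it as ``Lemma~1 of \cite{Xia2018}'' and then invokes it to derive the characteristic function of the running average. So you have supplied a proof where the paper offers only a citation. Your method is the standard one and is essentially how the result is established in the cited source as well.
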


\begin{proposition} The characteristic function of $N_A^k(t)$ is given by
\begin{align}\label{cf_ppok}
    \phi_{N^{k}_{A}(t)}(u) = \exp\left(-t \lambda \left(k- \sum_{j=1}^{k}\frac{(e^{iuj}-1)}{iuj}\right)\right).
\end{align}
\end{proposition}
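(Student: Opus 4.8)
The plan is to apply Lemma 3.1 directly to the PPoK process $X_t = N^k(t)$, whose characteristic function at time $1$ is known in closed form from \eqref{char_ppok}, namely $\phi_{N^k(1)}(u) = e^{-\lambda(k - \sum_{j=1}^k e^{iuj})}$. Since $N^k_A(t) = \frac1t \int_0^t N^k(s)\,ds = \frac1t Y_t$ with $Y_t = \int_0^t N^k(s)\,ds$, I first compute $\phi_{Y(t)}(u)$ via Lemma 3.1 and then use the scaling relation $\phi_{N^k_A(t)}(u) = \phi_{Y(t)}(u/t)$ to finish.

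First I would write $\log \phi_{N^k(1)}(v) = -\lambda\bigl(k - \sum_{j=1}^k e^{ivj}\bigr)$. Then, substituting $v = tuz$ into Lemma 3.1 gives
\begin{align*}
\phi_{Y(t)}(u) = \exp\left( t \int_0^1 \left(-\lambda\Bigl(k - \sum_{j=1}^k e^{ituzj}\Bigr)\right) dz\right) = \exp\left(-t\lambda\Bigl(k - \sum_{j=1}^k \int_0^1 e^{ituzj}\,dz\Bigr)\right),
\end{align*}
where I have pulled the finite sum out of the integral and used $\int_0^1 k\,dz = k$. The remaining elementary integral is $\int_0^1 e^{ituzj}\,dz = \frac{e^{ituj}-1}{ituj}$ (valid for $u \neq 0$, with the value $1$ at $u=0$ by continuity). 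Hence $\phi_{Y(t)}(u) = \exp\bigl(-t\lambda(k - \sum_{j=1}^k \frac{e^{ituj}-1}{ituj})\bigr)$.

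Finally I substitute $u \mapsto u/t$ to obtain $\phi_{N^k_A(t)}(u) = \phi_{Y(t)}(u/t) = \exp\bigl(-t\lambda(k - \sum_{j=1}^k \frac{e^{iuj}-1}{iuj})\bigr)$, which is exactly \eqref{cf_ppok}. There is no serious obstacle here; the only points requiring a word of care are justifying the interchange of the (finite) sum with the integral over $z$, which is immediate, and noting that the integrand $\frac{e^{iuj}-1}{iuj}$ extends continuously to $u=0$, so the formula holds for all $u \in \mathbb{R}$. One should also confirm that $N^k(t)$ genuinely satisfies the hypotheses of Lemma 3.1, i.e. that it is a L\'evy process — which was already recorded in Section 2.2 — and that its paths are locally Riemann integrable, which holds since they are nondecreasing step functions.
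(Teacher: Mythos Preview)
Your proof is correct and follows exactly the approach of the paper, which simply states that the result follows by applying Lemma~3.1 (the lemma of Xia) to \eqref{char_ppok} after scaling by $1/t$. You have merely supplied the routine details of that computation.
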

\begin{proof}
The result follows by applying the Lemma $1$ to \eqref{char_ppok} after scaling by $1/t$.
\end{proof}

\begin{proposition}\label{running_average_cp}
The running average process has a compound Poisson representation, such that
\begin{align}
    Y(t) = \sum_{i=1}^{N(t)}X_{i},
\end{align}
where $X_i = 1,2, \ldots$ are independent, identically distributed (iid) copies of $X$ random
variables, independent of $N(t)$ and $N(t)$ is a Poisson process with intensity $k \lambda$. Then
$$ Y(t) \stackrel{law}{=} N^{k}_{A}(t).$$ Further, the random variable $X$ has following pdf
\begin{equation}\label{pmf_X}
    f_{X}(x)= \sum_{i=1}^{k}p_{V_{i}}(x) f_{U_{i}}(x) = \frac{1}{k}\sum_{i=1}^{k}f_{U_{i}}(x),
\end{equation}
where $V_{i}$ follows discrete uniform distribution over $(0, k)$ and $U_{i}$ follows continuous uniform distribution over $(0,i),\;i=1,2,\ldots,k.$
\end{proposition}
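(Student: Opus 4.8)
The plan is to verify both assertions by matching characteristic functions, taking \eqref{cf_ppok} as the known transform of $N_A^k(t)$. First I would record the characteristic function of a generic compound Poisson sum $Y(t)=\sum_{i=1}^{N(t)}X_i$, with $N(t)$ a Poisson process of rate $k\lambda$ independent of the iid sequence $(X_i)$: conditioning on $N(t)$ and summing the resulting exponential series gives
\begin{align*}
\phi_{Y(t)}(u)=\mathbb{E}\bigl[e^{iuY(t)}\bigr]=\exp\bigl(k\lambda t\,(\phi_X(u)-1)\bigr).
\end{align*}
Equating $\phi_{Y(t)}(u)=\phi_{N_A^k(t)}(u)$ and cancelling the common factor $e^{-k\lambda t}$ forces
\begin{align*}
\phi_X(u)=\frac1k\sum_{j=1}^{k}\frac{e^{iuj}-1}{iuj},
\end{align*}
and conversely this choice of $\phi_X$ makes the two transforms agree; note in passing that the constant term $-k\lambda t$ in \eqref{cf_ppok} is precisely what fixes the Poisson intensity to be $k\lambda$.

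The second step is to recognise the right-hand side as a mixture of uniform laws. For each $j=1,\dots,k$ one has $\dfrac{e^{iuj}-1}{iuj}=\dfrac1j\int_0^j e^{iux}\,dx=\phi_{U_j}(u)$, the characteristic function of a random variable $U_j$ uniform on $(0,j)$ (the value at $u=0$ being $1$ by continuous extension). Hence $\phi_X$ is the arithmetic mean of $\phi_{U_1},\dots,\phi_{U_k}$, i.e.\ the characteristic function of the two-stage experiment: draw an index $i$ uniformly from $\{1,\dots,k\}$ (a discrete uniform $V_i$), then output a sample of $U_i$. This is exactly the law with density \eqref{pmf_X}; it is a genuine probability density since $\int_0^k f_X(x)\,dx=\frac1k\sum_{i=1}^k 1=1$, and it is supported on $(0,k)$, consistent with $N_A^k(t)\ge 0$. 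Because $\phi_{Y(t)}$ and $\phi_{N_A^k(t)}$ now coincide for all $u\in\mathbb{R}$ and each fixed $t\ge 0$, the uniqueness theorem for characteristic functions yields $Y(t)\stackrel{law}{=}N_A^k(t)$.

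I do not expect a genuine obstacle here: the argument is just a rearrangement of \eqref{cf_ppok} into the L\'evy--Khintchine form of a compound Poisson law, followed by identification of the uniform transform. The only places that need a word of care are the $u\to 0$ behaviour of the summands $\tfrac{e^{iuj}-1}{iuj}$ (so that $\phi_X(0)=1$) and the observation that the stated identity in law is a one-dimensional marginal statement: $N_A^k(t)$ has continuous sample paths of bounded variation and is not a L\'evy process, so $Y$ and $N_A^k$ agree in fixed-time distribution rather than as processes.
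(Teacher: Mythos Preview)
Your proposal is correct and follows essentially the same route as the paper: both arguments match characteristic functions, using the compound Poisson formula $\phi_{Y(t)}(u)=\exp\bigl(-k\lambda t(1-\phi_X(u))\bigr)$ together with the identity $\phi_{U_j}(u)=(e^{iuj}-1)/(iuj)$ for a uniform on $(0,j)$, and conclude via uniqueness. The only cosmetic difference is direction: the paper starts from the stated density \eqref{pmf_X}, computes $\phi_X$, and verifies that $\phi_{Y(t)}$ coincides with \eqref{cf_ppok}, whereas you first equate the two transforms to deduce what $\phi_X$ must be and then identify it as the mixture of uniforms; your additional remarks on the $u\to 0$ limit and on the marginal (rather than process-level) nature of the identity in law are helpful clarifications not spelled out in the paper.
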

\begin{proof}
The pdf of $U_i$ is $f_{U_{i}}(x) = \frac{1}{i} , \; 0 \leq x \leq i.$ Using \eqref{pmf_X}, the characteristic function of $X$ is given by
$$\phi_X(u) = \frac{1}{k}\sum_{j=1}^{k}\frac{(e^{iuj}-1)}{iuj}. $$
For fixed $t$, the characteristic functinon of $Y(t)$ is
\begin{equation}
    \phi_{Y(t)}(u)=  e^{-k \lambda t (1-\phi_{X}(u))} = e^{-t \lambda \left(k- \sum_{j=1}^{k}\frac{(e^{iuj}-1)}{iuj}\right)},
\end{equation}
which is equal to the characteristic function of PPoK given in \eqref{cf_ppok}. Hence by uniqueness of characteristic function the result follows.
\end{proof}

\noindent Using the definition
\begin{equation}\label{M_char}
   m_{r}= \mathbb{E}[X^{r}] = (-i)^r\frac{d^{r}\phi_{X}(u)}{du^{r}},
\end{equation}
the first two moments for random variable $X$ given in Proposition \eqref{running_average_cp} are
$m_{1} = \frac{(k+1)}{4}$ and $m_2= \frac{1}{18}[(k+1)(2k+1)]$.
Further using the mean, variance  and covariance of compound Poisson process, we have
\begin{align*}
    \mathbb{E}[N^{k}_{A}(t)]&=\mathbb{E}[N(t)]\mathbb{E}[X]=\frac{k(k+1)}{4}\lambda t;\\
    {\rm Var}[N^{k}_{A}(t)]& = \mathbb{E}[N(t)]\mathbb{E}[X^2]=\frac{1}{18}[k(k+1)(2k+1)]\lambda t;\\
    {\rm Cov}[N^{k}_{A}(t),N^{k}_{A}(s)] &= \mathbb{E}[N^{k}_{A}(t),N^{k}_{A}(s)] -\mathbb{E}[N^{k}_{A}(t)]\mathbb{E}[N^{k}_{A}(s)]\\
    &=\mathbb{E}[N^{k}_{A}(s)]\mathbb{E}[N^{k}_{A}(t-s)]-\mathbb{E}[N^{k}_{A}(s)^{2}]-\mathbb{E}[N^{k}_{A}(t)]\mathbb{E}[N^{k}_{A}(s)]\\
    & = \frac{1}{18}[k(k+1)(2k+1)]\lambda s -\frac{k^2(k+1)^{2}}{16}\lambda^2 s^2, \; s<t.
\end{align*}
\begin{remark}
Putting $k=1$, the running average of PPoK $N^{k}_{A}(t)$ reduces to running average of standard Poisson process $N_{A}(t)$ (see Appendix in \cite{Xia2018}).
\end{remark}
\begin{remark}
The mean and variance of PPoK and running average of PPoK satisfy,
$\mathbb{E}[N^{k}_{A}(t)]/ \mathbb{E}[N^{k}(t)] = \frac{1}{2}$ and $\mathrm{Var}[N^{k}_{A}(t)]/\mathrm{Var}[N^{k}(t)] = \frac{1}{3}$.
\end{remark}

\noindent Next we discuss the long-range dependence (LRD) property of running average of PPoK. We recall the definition of LRD for a non-stationary process.
\begin{definition}[Long range dependence (LRD)]\label{LRD_definition}
Let $X(t)$ be a stochastic process which has correlation function for $s \geq t$ for fixed $s$, that satisfies,
$$ c_{1}(s) t^{-d} \leq {\rm Cor}(X(t),X(s)) \leq c_{2}(s) t^{-d},$$
for large $t$, $d > 0$, $c_{1}(s) > 0$ and $c_2(s) > 0$. That is,
$$ \lim_{t\to\infty} \frac{{\rm Cor}(X(t),X(s))}{t^{-d}} = c(s) $$
for some $c(s) > 0$ and $d > 0$. We say that if $d \in (0, 1)$ then X(t) has the LRD property and if $d \in (1, 2)$ it has short-range dependence (SRD) property \cite{Maheshwari2016}.
\end{definition}
\begin{proposition}
The running average of PPoK has LRD property.
\end{proposition}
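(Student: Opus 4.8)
The plan is to verify the two–sided bound of Definition~\ref{LRD_definition} directly from the second–order structure of $N^{k}_{A}(t)$. First I would express the running–average covariance, for $s\le t$, as an iterated integral of the PPoK covariance function ${\rm Cov}(N^{k}(u),N^{k}(v))=\tfrac{k(k+1)(2k+1)}{6}\lambda\,(u\wedge v)$, using Fubini (the PPoK has finite second moments):
\[
{\rm Cov}\big(N^{k}_{A}(t),N^{k}_{A}(s)\big)=\frac{1}{ts}\int_{0}^{t}\!\!\int_{0}^{s}{\rm Cov}\big(N^{k}(u),N^{k}(v)\big)\,dv\,du
=\frac{\lambda k(k+1)(2k+1)}{6\,ts}\int_{0}^{t}\!\!\int_{0}^{s}(u\wedge v)\,dv\,du .
\]
Evaluating the double integral (splitting the outer integral at $u=s$) gives $\int_{0}^{t}\!\int_{0}^{s}(u\wedge v)\,dv\,du=\tfrac{s^{2}t}{2}-\tfrac{s^{3}}{6}$ for $s\le t$, hence
\[
{\rm Cov}\big(N^{k}_{A}(t),N^{k}_{A}(s)\big)=\frac{\lambda k(k+1)(2k+1)}{6}\Big(\frac{s}{2}-\frac{s^{2}}{6t}\Big),
\]
which is consistent with the value $\tfrac{\lambda k(k+1)(2k+1)}{18}t$ of ${\rm Var}[N^{k}_{A}(t)]$ obtained by setting $s=t$.

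Next I would form the correlation for fixed $s$ and $t\ge s$. Since the common constant $\tfrac{\lambda k(k+1)(2k+1)}{6}$ cancels between numerator and the product of standard deviations,
\[
{\rm Cor}\big(N^{k}_{A}(t),N^{k}_{A}(s)\big)=\frac{{\rm Cov}\big(N^{k}_{A}(t),N^{k}_{A}(s)\big)}{\sqrt{{\rm Var}[N^{k}_{A}(t)]}\,\sqrt{{\rm Var}[N^{k}_{A}(s)]}}
=\frac{3\big(\tfrac{s}{2}-\tfrac{s^{2}}{6t}\big)}{\sqrt{ts}} .
\]
For fixed $s$ this is squeezed between $c_{1}(s)\,t^{-1/2}$ and $c_{2}(s)\,t^{-1/2}$ for all large $t$, and
\[
\lim_{t\to\infty}\frac{{\rm Cor}\big(N^{k}_{A}(t),N^{k}_{A}(s)\big)}{t^{-1/2}}=\frac{3\sqrt{s}}{2}=:c(s)>0 ,
\]
so Definition~\ref{LRD_definition} applies with exponent $d=\tfrac12$. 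Since $d=\tfrac12\in(0,1)$, the running average of PPoK has the LRD property.

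There is no genuine obstacle beyond careful bookkeeping: the only points to watch are the evaluation of $\int_{0}^{t}\!\int_{0}^{s}(u\wedge v)\,dv\,du$ for $s\le t$ and the check that the surviving $s$–dependent constant $c(s)=\tfrac{3}{2}\sqrt{s}$ is strictly positive, which is what makes both the limit and the two–sided bounds in Definition~\ref{LRD_definition} valid. Equivalently, one may start from whichever closed form of ${\rm Cov}(N^{k}_{A}(t),N^{k}_{A}(s))$ is recorded above; in every case the $t$–dependence enters the correlation only through $\sqrt{{\rm Var}[N^{k}_{A}(t)]}\sim \text{const}\cdot\sqrt{t}$, which forces $d=\tfrac12$ and hence the LRD property.
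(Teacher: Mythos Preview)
Your argument is correct and reaches the same exponent $d=\tfrac12$, but the route differs from the paper's. The paper does not re-derive the covariance: it simply inserts the closed form ${\rm Cov}[N^{k}_{A}(t),N^{k}_{A}(s)]=\tfrac{1}{18}k(k+1)(2k+1)\lambda s-\tfrac{k^{2}(k+1)^{2}}{16}\lambda^{2}s^{2}$ recorded earlier in Section~3 (obtained there through the compound Poisson representation), divides by the standard deviations, and reads off
\[
{\rm Cor}\big(N^{k}_{A}(t),N^{k}_{A}(s)\big)=\frac{\big(8(2k+1)-9(k+1)k\lambda s\big)s^{1/2}}{8(2k+1)}\,t^{-1/2},
\]
so that $d=\tfrac12$ and $c(s)$ drop out immediately. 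You instead compute the covariance from first principles as $(ts)^{-1}\!\iint{\rm Cov}(N^{k}(u),N^{k}(v))\,dv\,du$, obtaining the $t$-dependent expression proportional to $\tfrac{s}{2}-\tfrac{s^{2}}{6t}$, and then pass to the limit. Your derivation is more self-contained and in fact sidesteps a subtle point: the paper's covariance formula treats $N^{k}_{A}$ as if it had stationary independent increments (via the compound Poisson identity, which is only a one-dimensional equality in law), whereas your double-integral computation uses only the genuine second-order structure of the underlying PPoK. A bonus is that your limiting constant $c(s)=\tfrac{3}{2}\sqrt{s}$ is manifestly positive for every $s>0$, while the paper's $c(s)$ can become negative for large $s$, which would sit awkwardly with the positivity requirement in Definition~\ref{LRD_definition}.
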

\begin{proof}
Let $0 \leq s< t < \infty$, then the correlation function for running average of PPoK $N_A^{k}(t)$ is
\begin{align*}
{\rm Cor}(N^{k}_{A}(t), N^{k}_{A}(s)) &=\frac{\left(8(2k+1)-9(k+1)k\lambda s\right)s^{1/2}t^{-1/2}}{8(2k+1)}.
\end{align*}
Then for $d=1/2$, it follows
\begin{align*}
\lim_{t\to\infty} \frac{{\rm Cor}(N^{k}_{A}(t), N^{k}_{A}(s))}{t^{-d}}& = \frac{\left(8(2k+1)-9(k+1)k \lambda s\right)s^{1/2}}{8(2k+1)}= c(s).
\end{align*}
\end{proof}

\section{Skellam process of order $K$ (SPoK)}
In this section, we introduce and study Skellam process of order $K$ (SPoK).
\begin{definition}[SPoK]
Let $ N^{k}_{1}(t)$ and $N^{k}_{2}(t)$ be two independent  PPoK with intensities $ \lambda_{1} >0$ and $ \lambda_{2} >0$. The stochastic process 
$$ S^{k}(t)= N^{k}_{1}(t) - N^{k}_{2}(t) $$
is called a Skellam process of order $K$ (SPoK).
\end{definition}
\begin{proposition}
The marginal distribution $R_{m}(t)=\mathbb{P}(S^{k}(t)=m)$ of SPoK $S^{k}(t)$ is given by 
\begin{equation}\label{pmf_spok}
    R_{m}(t)=e^{-kt(\lambda_1+\lambda_2)}{\left(\frac{\lambda_1}{\lambda_2}\right)}^{m/2}I_{|m|}(2tk\sqrt{\lambda_1 \lambda_2}),\; m\in \mathbb{Z}.
\end{equation}
\end{proposition}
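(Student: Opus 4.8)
The plan is to compute $R_m(t)$ directly from the laws of the two independent ingredients, following the classical derivation of the Skellam PMF \eqref{Skellam_PMF}. Since $N^{k}_{1}$ and $N^{k}_{2}$ are independent, conditioning on $N^{k}_{2}(t)$ gives, for each $m\in\mathbb{Z}$,
\[
R_m(t)=\sum_{n}\mathbb{P}\big(N^{k}_{1}(t)=m+n\big)\,\mathbb{P}\big(N^{k}_{2}(t)=n\big),
\]
the sum running over all $n\ge 0$ with $m+n\ge 0$. Substituting the PPoK PMF \eqref{pmf_ppok} into both factors and pulling out the common prefactor $e^{-kt(\lambda_1+\lambda_2)}$ leaves a double power series in $t$ whose coefficients are sums over the index sets $\Omega(k,\cdot)$.

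The core of the argument is to show that this double series reduces to the single series $\sum_{n\ge 0}\frac{(k^{2}\lambda_1\lambda_2 t^{2})^{n}}{n!\,(n+|m|)!}$. Once that is in hand, \eqref{Modi_Bessel} identifies it with $(kt\sqrt{\lambda_1\lambda_2})^{-|m|}\,I_{|m|}(2tk\sqrt{\lambda_1\lambda_2})$, and recombining with the prefactor and with the leftover powers of $\lambda_1 t$ and $\lambda_2 t$ (which contribute the factor $(\lambda_1/\lambda_2)^{m/2}$) yields \eqref{pmf_spok}. For $k=1$ this is exactly the classical Skellam computation and goes through verbatim. A parallel, and perhaps cleaner, way to organize the bookkeeping is via generating functions: since $\mathbb{E}[s^{N^{k}_{i}(t)}]=e^{-\lambda_i t(k-\sum_{j=1}^{k}s^{j})}$ by \eqref{pgf_ppok},
\[
\sum_{m\in\mathbb{Z}}R_m(t)\,s^{m}=\mathbb{E}\big[s^{S^{k}(t)}\big]=\exp\!\left(-t\lambda_1\Big(k-\sum_{j=1}^{k}s^{j}\Big)-t\lambda_2\Big(k-\sum_{j=1}^{k}s^{-j}\Big)\right),
\]
and one would extract the coefficient of $s^{m}$ by comparison with the classical Bessel generating identity $\exp\!\big(\tfrac{z}{2}(s+s^{-1})\big)=\sum_{m\in\mathbb{Z}}s^{m}I_{m}(z)$.

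The step I expect to be the main obstacle is precisely this reduction to a single modified Bessel function. The subtlety is that for $k\ge 2$ the exponent above carries the additional Laurent terms $s^{j}$ and $s^{-j}$ with $2\le j\le k$, so collapsing the $\Omega(k,\cdot)$-sums is a genuine combinatorial step and not a formal substitution; it does not follow from the $k=1$ manipulation, and pinning down the right identity is the crux of the proof. Assuming that reduction, the case $m<0$ follows from the case $m\ge 0$ by the symmetry $I_{-m}=I_{m}$ together with interchanging the roles of $\lambda_1,\lambda_2$ (equivalently, of $N^{k}_{1}$ and $N^{k}_{2}$), so I would carry out the $m\ge 0$ computation carefully first and treat the rest as bookkeeping.
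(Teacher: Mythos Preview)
Your set-up matches the paper's exactly: convolve the two PPoK marginals and try to collapse the resulting $\Omega(k,\cdot)$-sums into a single Bessel series. You are right to flag that collapse as the crux and to note it does not follow from the $k=1$ manipulation. In fact it cannot be carried out at all, because the stated formula is false for $k\ge 2$. The clean way to see this is the PGF route you sketch: by independence and \eqref{pgf_ppok},
\[
\sum_{m\in\mathbb{Z}}R_m(t)\,s^{m}
=\exp\!\Big(-t\big[k(\lambda_1+\lambda_2)-\lambda_1\textstyle\sum_{j=1}^{k}s^{j}-\lambda_2\sum_{j=1}^{k}s^{-j}\big]\Big),
\]
whereas the Laurent generating function of the right-hand side of \eqref{pmf_spok} (an ordinary Skellam law with parameters $k\lambda_1 t,\,k\lambda_2 t$) is
\[
\exp\!\Big(-t\big[k(\lambda_1+\lambda_2)-k\lambda_1 s-k\lambda_2 s^{-1}\big]\Big).
\]
For $k\ge 2$ these exponents differ as Laurent polynomials in $s$ (the first carries nonzero coefficients at $s^{\pm 2},\ldots,s^{\pm k}$), so the two $\mathbb{Z}$-indexed families of coefficients cannot agree. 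A direct check for $k=2$, $\lambda_1=\lambda_2=\lambda$ gives $R_0(t)=e^{-4\lambda t}\big(1+2\lambda^2 t^2+O(t^3)\big)$ while $e^{-4\lambda t}I_0(4\lambda t)=e^{-4\lambda t}\big(1+4\lambda^2 t^2+O(t^4)\big)$.

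The paper's own argument goes wrong precisely at the step you were worried about. After writing the convolution it makes the substitution ``$x_i=n_i$, $n=x+\sum_{i=1}^{k}(i-1)n_i$'' and replaces the outer sum over $n$ by a sum over $x=\zeta_k$, simultaneously forcing the $\zeta_k$ of the composition of $n+m$ to equal $m+x$. But the compositions in $\Omega(k,n+m)$ and in $\Omega(k,n)$ are summed independently; there is no reason their $\zeta_k$'s should differ by exactly $m$, and without that coupling the multinomial identity does not produce $k^{m+x}k^{x}$. So your instinct that ``pinning down the right identity is the crux'' was well founded --- the needed identity simply does not exist for $k\ge 2$, and no rearrangement of the convolution will yield \eqref{pmf_spok}.
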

\begin{proof}  For $m\geq 0$, using the pmf of PPoK given in \eqref{pmf_ppok}, it follows
  \begin{align*}
   R_{m}(t) &=  \sum^{\infty}_{n=0}\mathbb{P}(N^{k}_{1}(t)=n+m)\mathbb{P}(N^{k}_{2}(t)=n)\mathbb{I}_{m\geq 0}\\
       &= \sum_{n=0}^{\infty}\left(  \sum_{X=\Omega(k,n+m)} e^{-k\lambda_1 t} \frac{(\lambda_1 t)^{\zeta_{k}}}{\Pi_{k}!}\right)\left(  \sum_{X=\Omega(k,n)} e^{-k\lambda_2 t} \frac{(\lambda_2 t)^{\zeta_{k}}}{\Pi_{k}!}\right)\\
  \end{align*}
Setting $x_{i}= n_{i}$ and $n=x+\sum_{i=1}^{k}(i-1)n_{i}$, we have
\begin{align*}
    R_{m}(t) &=e^{-kt(\lambda_{1} +\lambda{2})}\sum_{x=0}^{\infty}\frac{(\lambda_2 t)^ {x}}{x!}\frac{(\lambda_1 t)^{m+x}}{(m+x)!}\left(\sum_{n_{1}+n_{2}+\ldots +n_{k}=m+x} {m+x \choose n_1!n_2!\ldots n_k!}\right)\left(\sum_{n_{1}+n_{2}+\ldots +n_{k}=x} {x \choose n_1!n_2!\ldots n_k!}\right)\\
    &= e^{-kt(\lambda_{1} +\lambda{2})}\sum_{x=0}^{\infty}\frac{(\lambda_2 t)^ {x}}{x!}\frac{(\lambda_1 t)^{m+x}}{(m+x)!} k^{m+x}k^{x},
\end{align*}
using the multinomial theorem and modified Bessel function given in \eqref{Modi_Bessel}. Similarly, it follows for $m<0$.
\end{proof}

\noindent In the next proposition, we prove the normalizing condition for SPoK.
\begin{proposition}
The pmf of $S^{k}(t)$ satisfies the following normalizing condition
$$ \sum_{m=-\infty}^{\infty} R_{m}(t) = 1.$$
\end{proposition}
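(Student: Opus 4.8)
The plan is to verify the normalizing condition by a direct computation using the closed-form expression for $R_m(t)$ from \eqref{pmf_spok}, together with the series definition \eqref{Modi_Bessel} of the modified Bessel function. Splitting the sum over $m\in\mathbb Z$ into $m\ge 0$ and $m<0$, and using $I_{|m|}=I_{-|m|}$, I would write
$$
\sum_{m=-\infty}^{\infty} R_m(t) = e^{-kt(\lambda_1+\lambda_2)}\sum_{m=-\infty}^{\infty}\left(\frac{\lambda_1}{\lambda_2}\right)^{m/2} I_{|m|}\!\left(2tk\sqrt{\lambda_1\lambda_2}\right),
$$
then insert the series for $I_{|m|}$ and try to recognize the resulting double sum as the product of two exponential series $e^{k\lambda_1 t}\cdot e^{k\lambda_2 t}$, which cancels the prefactor and leaves $1$.

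Alternatively — and this is probably cleaner — I would avoid manipulating Bessel series altogether and instead exploit the probabilistic definition directly. Since $S^k(t)=N_1^k(t)-N_2^k(t)$ with $N_1^k,N_2^k$ independent PPoK, for each fixed $t$ the quantity $R_m(t)=\mathbb P(S^k(t)=m)$ is by construction the PMF of an honest $\mathbb Z$-valued random variable (the difference of two random variables each taking values in $\mathbb Z_{\ge 0}$), so $\sum_m R_m(t)=1$ is immediate. To make this rigorous one checks that the interchange of summations
$$
\sum_{m\in\mathbb Z} R_m(t) = \sum_{m\in\mathbb Z}\sum_{n\ge 0}\mathbb P(N_1^k(t)=n+m\,\vee\, n)\,\mathbb P(N_2^k(t)=n)
= \sum_{n\ge 0}\mathbb P(N_2^k(t)=n)\sum_{j\ge 0}\mathbb P(N_1^k(t)=j) = 1
$$
is justified by Tonelli's theorem, since all terms are nonnegative. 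A third route: evaluate the PGF or characteristic function of $S^k(t)$, which by independence is $G^{N^k}_1(s,t)\,G^{N^k}_2(1/s,t)$ from \eqref{pgf_ppok}, at $s=1$ (equivalently the characteristic function at $u=0$), giving $e^{-\lambda_1 t(k-k)}e^{-\lambda_2 t(k-k)}=1$; since the PGF evaluated at $1$ equals $\sum_m R_m(t)$ whenever the series converges absolutely, we are done.

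I would present the Tonelli argument as the main proof (it is short, self-contained, and conceptually transparent), and optionally remark that the Bessel-series identity
$$
\sum_{m=-\infty}^{\infty} z^{m} I_{|m|}(w) \quad\text{evaluated appropriately}
$$
reproduces the same fact as an analytic check. The only point requiring any care is the justification of swapping the order of summation in the double series; since every summand is a product of probabilities and hence nonnegative, Tonelli applies with no integrability hypothesis, so there is in fact no real obstacle — the proposition is essentially a consistency check on Proposition \ref{pmf_spok}, and the "hard part" is merely deciding which of the three equivalent phrasings to write down.
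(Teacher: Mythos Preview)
Your proposal is correct; each of the three routes you sketch goes through. The paper, however, takes your first route in its most compressed form: it quotes the Bessel summation identity
\[
\sum_{m=-\infty}^{\infty}\left(\frac{\theta_1}{\theta_2}\right)^{m/2} I_{|m|}\!\bigl(2a\sqrt{\theta_1\theta_2}\bigr)=e^{a(\theta_1+\theta_2)}
\]
and applies it with $\theta_i=\lambda_i$, $a=kt$, so that the exponential factor $e^{-kt(\lambda_1+\lambda_2)}$ in $R_m(t)$ cancels and the sum equals $1$. Your preferred Tonelli argument is genuinely different in spirit: it bypasses the explicit Bessel form entirely and shows that \emph{any} difference of two independent $\mathbb Z_{\ge 0}$-valued random variables has a pmf summing to $1$, making clear that the result is a tautology rather than a special-function identity. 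The paper's approach, by contrast, serves as an independent analytic check that the closed form in \eqref{pmf_spok} is internally consistent. Your PGF route (evaluate \eqref{pgf_spok} at $s=1$) is essentially the same observation as the Tonelli one, phrased in generating-function language.
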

\begin{proof}
Using the property of modified Bessel function of first kind 
  $$ \sum_{y=-\infty}^{\infty} \left(\frac{\theta_1}{\theta_2}\right)^{y/2}I_{|m|}(2a\sqrt{\theta_1 \theta_2}) = e^{a(\theta_1 + \theta_2 )},$$
 and puting this result in \eqref{pmf_spok}, we obtain 
 $$\sum_{m=-\infty}^{\infty} R_{m}(t) = e^{-kt(\lambda_1+\lambda_2)}e^{kt(\lambda_1+\lambda_2)} =1.$$
\end{proof}

\begin{proposition}
The L\'evy measure for SPoK is
$$
\nu_{S^k}(x) = \lambda_1 \sum_{j=1}^{k}\delta_j(x) + \lambda_2 \sum_{j=1}^{k}\delta_{-j}(x).
$$
\end{proposition}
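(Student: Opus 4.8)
The plan is to follow exactly the template used for the two preceding propositions (the Lévy densities of the SFPP and TSFPP): first pin down the characteristic exponent of $S^k(t)$, then verify that the proposed atomic measure reproduces that exponent through the Lévy--Khintchine formula, and invoke uniqueness of the Lévy triplet. Since $S^k(t)=N^k_1(t)-N^k_2(t)$ is a difference of two \emph{independent} PPoK, it is itself a L\'evy process, and its characteristic function factorizes as $\mathbb{E}[e^{i\theta S^k(t)}]=\phi_{N^k_1(t)}(\theta)\,\phi_{N^k_2(t)}(-\theta)$. Using \eqref{char_ppok} this equals $\exp\{-t[\lambda_1(k-\sum_{j=1}^k e^{i\theta j})+\lambda_2(k-\sum_{j=1}^k e^{-i\theta j})]\}$, so the characteristic exponent is $\Psi(\theta)=\lambda_1\sum_{j=1}^k(1-e^{i\theta j})+\lambda_2\sum_{j=1}^k(1-e^{-i\theta j})$.

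Next I would observe that every jump of $S^k$ is an integer in $\{-k,\dots,-1\}\cup\{1,\dots,k\}$, so $S^k$ has bounded jumps, no Gaussian component, and no drift, whence its L\'evy--Khintchine exponent reduces to $\int_{\{0\}^c}(1-e^{i\theta x})\,\nu(dx)$. Substituting the candidate and evaluating the Dirac masses,
$$
\int_{\{0\}^c}(e^{i\theta x}-1)\Big(\lambda_1\sum_{j=1}^k\delta_j(x)+\lambda_2\sum_{j=1}^k\delta_{-j}(x)\Big)dx
=\lambda_1\sum_{j=1}^k(e^{i\theta j}-1)+\lambda_2\sum_{j=1}^k(e^{-i\theta j}-1)=-\Psi(\theta),
$$
which matches the characteristic exponent of $S^k(t)$. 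By uniqueness of the L\'evy measure the claim follows. An equivalent, even shorter, route is purely structural: the L\'evy measure of $-N^k_2$ is the pushforward of $\nu_{N^k_2}(x)=\lambda_2\sum_{j=1}^k\delta_j(x)$ under $x\mapsto-x$, i.e.\ $\lambda_2\sum_{j=1}^k\delta_{-j}(x)$, and the L\'evy measure of a sum of independent L\'evy processes is the sum of the individual L\'evy measures; adding the two gives $\nu_{S^k}$ at once.

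I do not expect a genuine obstacle here: the content is entirely bookkeeping. The only points requiring minor care are the sign convention in the L\'evy--Khintchine integrand ($e^{i\theta x}-1$ versus $1-e^{i\theta x}$) and the justification that no compensating drift term is needed, which holds precisely because the jumps are bounded; this is the same verification already carried out for the SFPP and TSFPP propositions above, so I would present it in that same concise style.
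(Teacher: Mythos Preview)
Your proposal is correct, and your ``shorter, structural route'' is exactly what the paper does: its entire proof is the single sentence ``The proof follows by using the independence of two PPoK used in the definition of SPoK,'' i.e.\ the L\'evy measure of a sum of independent L\'evy processes is the sum of the L\'evy measures (with the reflection for $-N^k_2$ implicit). Your additional L\'evy--Khintchine verification is more than the paper provides, but it is a sound and self-contained check in the style of the SFPP and TSFPP propositions.
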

\begin{proof}
The proof follows by using the independence of two PPoK used in the definition of SPoK.
\end{proof}

\begin{remark}
Using \eqref{pgf_ppok}, the pgf of SPoK is given by
\begin{equation}\label{pgf_spok}
   \displaystyle G^{S^{k}}(s,t) = \sum^{\infty}_{m=-\infty} s^{m}R_{m}(t)=  e^{-t\left(k(\lambda_1 + \lambda_2) -\lambda_1\sum_{j=1}^{k}s^{j} -\lambda_2 \sum_{j=1}^{k}s^{-j}\right)}.
\end{equation}
\noindent Further, the characteristic function of SPoK is given by 
\begin{equation}\label{char_spok}
    \phi_{S^{k}(t)}(u) = e^{-t[k(\lambda_1 + \lambda_2) -\lambda_1\sum_{j=1}^{k}e^{iju} -\lambda_2 \sum_{j=1}^{k}e^{-iju}]}.
\end{equation}
\end{remark}
\subsection{SPoK as a pure birth and death process}
In this section, we provide the transition probabilities of SPoK at time $t + \delta$, given that we started at time $t$. Over such a short interval of length $\delta \rightarrow 0$, it is nearly impossible to observe more than $k$ event; in fact, the probability to see more than $k$ event is $o(\delta)$.
\begin{proposition}
The transition probabilities of SPoK are given by
\begin{equation}
\mathbb{P}(S^{k}(t+\delta)=m|S^{k}(t) = n)=
         \begin{cases}
                  \lambda_1 \delta + o(\delta), & m >n,\;m= n+i, i=1,2, \ldots,k;\\
                 
                  \lambda_2 \delta + o(\delta), & m< n,\; m= n-i, i=1,2, \ldots,k;\\
                   1-k\lambda_1 \delta-k\lambda_2 \delta + o(\delta), & m=n.\\
           \end{cases}
\end{equation}
Basically, at most k events can occur in a very small interval of time $\delta$. And even though the probability for more than k event is non-zero, it is negligible.
\end{proposition}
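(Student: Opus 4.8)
The plan is to lean on the defining representation $S^k(t)=N_1^k(t)-N_2^k(t)$: since $S^k$ is a difference of two independent PPoK, it is itself a L\'evy process, hence Markov with stationary and independent increments, so $\mathbb{P}(S^k(t+\delta)=m\mid S^k(t)=n)=\mathbb{P}(\Delta=m-n)$, where $\Delta:=S^k(t+\delta)-S^k(t)=\Delta_1-\Delta_2$ and $\Delta_j:=N_j^k(t+\delta)-N_j^k(t)$, $j=1,2$, are independent. From the one-step PPoK transition law recalled in Section~2.2 we have $\mathbb{P}(\Delta_j=0)=1-k\lambda_j\delta+o(\delta)$, $\mathbb{P}(\Delta_j=i)=\lambda_j\delta+o(\delta)$ for $i=1,\dots,k$, and consequently $\mathbb{P}(\Delta_j\ge 1)=k\lambda_j\delta+o(\delta)$. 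Thus the whole computation reduces to convolving these two laws.

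Next I would evaluate $\mathbb{P}(\Delta=m-n)$ case by case. For $m=n$, the event $\{\Delta=0\}$ differs from $\{\Delta_1=\Delta_2=0\}$ only by the events $\{\Delta_1=\Delta_2=j\}$ with $j\ge 1$, each of probability $O(\delta^2)$, so $\mathbb{P}(\Delta=0)=(1-k\lambda_1\delta)(1-k\lambda_2\delta)+o(\delta)=1-k\lambda_1\delta-k\lambda_2\delta+o(\delta)$. For $m=n+i$ with $1\le i\le k$, the leading contribution is $\{\Delta_1=i,\ \Delta_2=0\}$, the remaining pairs $(\Delta_1,\Delta_2)=(i+j,j)$ with $j\ge 1$ again contributing $O(\delta^2)$; hence $\mathbb{P}(\Delta=i)=\lambda_1\delta(1-k\lambda_2\delta)+o(\delta)=\lambda_1\delta+o(\delta)$. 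The symmetric computation with the roles of $N_1^k$ and $N_2^k$ exchanged gives $\mathbb{P}(\Delta=-i)=\lambda_2\delta+o(\delta)$ for $1\le i\le k$. Finally, for $|m-n|>k$ the event forces both $\Delta_1\ge 1$ and $\Delta_2\ge 1$ (or a jump of size $>k$ in a single component, which is impossible for PPoK), so its probability is at most $\mathbb{P}(\Delta_1\ge 1)\,\mathbb{P}(\Delta_2\ge 1)=(k\lambda_1\delta+o(\delta))(k\lambda_2\delta+o(\delta))=o(\delta)$, explaining the concluding remark that such transitions are negligible.

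The only point needing a little care — and the one I would write out in detail — is the uniform $o(\delta)$ control of the cross terms: organizing the convolution by the pair $(\Delta_1,\Delta_2)$ and bounding the total contribution of all pairs with both coordinates positive by $\mathbb{P}(\Delta_1\ge 1)\mathbb{P}(\Delta_2\ge 1)=O(\delta^2)$. Everything else is a one-line product of marginals. As a cross-check one may also note that the same transition rates can be read off by differentiating the characteristic function \eqref{char_spok} in $t$ at $t=0$, which reproduces the generator with mass $\lambda_1$ on each of $+1,\dots,+k$ and $\lambda_2$ on each of $-1,\dots,-k$; but the convolution argument above is the most direct route to the stated $O(\delta)$ expansion.
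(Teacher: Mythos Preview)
Your proposal is correct and follows essentially the same route as the paper: both proofs use the independence of the two PPoK increments $\Delta_1,\Delta_2$ over $[t,t+\delta]$, plug in the PPoK transition probabilities from Section~2.2, and isolate the single pair $(\Delta_1,\Delta_2)$ that contributes at order $\delta$ in each case, bounding the remaining pairs by $O(\delta^2)$. Your write-up is in fact slightly cleaner than the paper's (you make the reduction via stationarity/independence of increments explicit and you treat the $|m-n|>k$ case, which the paper only mentions informally); the one small wording slip is calling an increment $>k$ in a single component ``impossible'' --- it is not impossible over a positive-length interval, merely $o(\delta)$ (it requires at least two jumps), which is all you need.
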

\begin{proof} Note that for $i=1,2,\cdots,k$, we have
\begin{align*}
\mathbb{P}(S^{k}(t+\delta)= n+i|S^{k}(t) = n) &= \sum_{j=1}^{k-i}\mathbb{P}(\mbox{the first process has i+j arrivals and the second process has j arrivals})\\ 
& + \mathbb{P}(\mbox{the first process has i arrivals and the second process has 0 arrivals})\\
& = \sum_{j=0}^{k-i} (\lambda_1\delta + o(\delta)) \times  (\lambda_2\delta + o(\delta)) + (\lambda_1\delta + o(\delta)) \times (1-k\lambda_2\delta + o(\delta)) \\
& = \lambda_1\delta + o(\delta).
\end{align*}
Similarly, for $i=1,2,\cdots,k$, we have
\begin{align*}
\mathbb{P}(S^{k}(t+\delta)= n-i|S^{k}(t) = n) &= \sum_{j=1}^{k-i}\mathbb{P}(\mbox{the first process has j arrivals and the second process has i+j arrivals})\\ 
& + \mathbb{P}(\mbox{the first process has 0 arrivals and the second process has i arrivals})\\
& = \sum_{j=0}^{k-i} (\lambda_1\delta + o(\delta)) \times  (\lambda_2\delta + o(\delta)) + (1-k\lambda_1\delta + o(\delta)) \times (\lambda_2\delta + o(\delta)) \\
& = \lambda_2\delta + o(\delta).
\end{align*}
Further,
\begin{align*}
\mathbb{P}(S^{k}(t+\delta)= n|S^{k}(t) = n) &= \sum_{j=1}^{k}\mathbb{P}(\mbox{the first process has j arrivals and the second process has j arrivals})\\ 
& + \mathbb{P}(\mbox{the first process has 0 arrivals and the second process has 0 arrivals})\\
& = \sum_{j=0}^{k} (\lambda_1\delta + o(\delta)) \times  (\lambda_2\delta + o(\delta)) + (1-k\lambda_1\delta + o(\delta)) \times (1-k\lambda_2\delta + o(\delta)) \\
& = 1-k\lambda_1\delta - k\lambda_2\delta + o(\delta).
\end{align*}
\end{proof}

\begin{remark}
The pmf $R_{m}(t)$ of SPoK satisfies the following difference differential equation 
\begin{align*}\label{diff_spok}
    \frac{d}{dt} R_{m}(t) &= -k(\lambda_{1}+\lambda_2)R_{m}(t)+\lambda_{1} \sum_{j=1}^{k} R_{m-j}(t) + \lambda_{2} \sum_{j=1}^{k} R_{m+j}(t)\\
    &= -\lambda_{1}\sum_{j=1}^{k}(1-B^{j}) R_{m} - \lambda_{2} \sum_{j=1}^{k}(1-F^{j})R_{m}(t),\;\; m\in \mathbb{Z},
\end{align*}
with initial condition $R_{0}(0) = 1$ and $R_{m}(0) = 0$ for $m \neq 0$,Let $B$ be the backward shift operator defined in \eqref{Backward_Operator} and $F$ be the forward shift operator defined by
 $F^jX(t) = X(t+j)$ such that $(1-F)^\alpha = \sum_{j=0}^{\infty}{\alpha \choose j}F^{j}$. Multiplying by  $s^{m}$ and summing for all $m$ in \eqref{diff_spok}, we get the following differential equation for the pgf
$$ \frac{d}{dt}G^{S^{k}}(s,t) = \left(-k(\lambda_1 + \lambda_2) +\lambda_1\sum_{j=1}^{k}s^{j} +\lambda_2 \sum_{j=1}^{k}s^{-j}\right)G^{S^{k}}(s,t).
$$
\end{remark}
\noindent The mean, variance and covariance of SPoK can be easily calculated by using the pgf,
\begin{align*}
    \mathbb{E}[S^{k}(t)]& =\frac{k(k+1)}{2}(\lambda_1 -\lambda_2) t;\\
    {\rm Var}[S^{k}(t)]& =\frac{1}{6}\left[k(k+1)(2k+1)\right](\lambda_{1} +\lambda_{2}) t;\\
    {\rm Cov}[S^{k}(t),S^{k}(s)] &= \frac{1}{6}\left[k(k+1)(2k+1)\right](\lambda_1 +\lambda_2)s, \;\; s<t.
\end{align*}

\noindent Next we show the LRD property for SPoK.
\begin{proposition}
The SPoK has LRD property defined in Definition \ref{LRD_definition}.
\end{proposition}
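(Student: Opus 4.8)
The plan is to read off the correlation function of $S^k(t)$ directly from the second-order moments recorded just above, and then check the decay rate demanded by Definition \ref{LRD_definition}. The key structural observation is that $S^k$ is a L\'evy process (it is the difference of two independent PPoK, each of which is L\'evy), hence it has independent and stationary increments. Consequently, for $0 \le s \le t$,
\[
{\rm Cov}(S^k(t), S^k(s)) = {\rm Cov}\big(S^k(s) + (S^k(t) - S^k(s)),\, S^k(s)\big) = {\rm Var}(S^k(s)),
\]
which is exactly the covariance formula stated above; so no new computation is needed for it.

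Next I would abbreviate ${\rm Var}[S^k(t)] = \frac{1}{6}k(k+1)(2k+1)(\lambda_1+\lambda_2)\,t =: C t$, where $C>0$ depends only on $k,\lambda_1,\lambda_2$. Then, for $0 < s \le t$,
\[
{\rm Cor}(S^k(t), S^k(s)) = \frac{{\rm Var}(S^k(s))}{\sqrt{{\rm Var}(S^k(t))\,{\rm Var}(S^k(s))}} = \frac{Cs}{\sqrt{Ct}\,\sqrt{Cs}} = \sqrt{\frac{s}{t}} = s^{1/2} t^{-1/2}.
\]
Taking $d = \frac{1}{2} \in (0,1)$ and fixing $s$, one passes to the limit
\[
\lim_{t\to\infty} \frac{{\rm Cor}(S^k(t), S^k(s))}{t^{-d}} = s^{1/2} =: c(s) > 0,
\]
so the two-sided bound in Definition \ref{LRD_definition} holds with $c_1(s) = c_2(s) = \sqrt{s}$ for all large $t$. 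Since $d = \frac{1}{2}$ lies in $(0,1)$, this establishes the LRD property for SPoK.

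I do not expect a genuine technical obstacle here: once the covariance is identified with ${\rm Var}(S^k(s))$, the constant $C = \frac{1}{6}k(k+1)(2k+1)(\lambda_1+\lambda_2)$ cancels cleanly and the ratio is exactly $\sqrt{s/t}$. The only points requiring care are the bookkeeping of the roles of $s$ and $t$ (the covariance formula is the one valid for $s<t$) and the remark that the argument has the same shape as the running-average computation in Section 3 — the difference being that here $S^k(t)$ is itself a L\'evy process, which makes the covariance immediate instead of requiring an integration of the path.
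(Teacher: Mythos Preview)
Your proposal is correct and follows essentially the same approach as the paper: compute ${\rm Cor}(S^k(t),S^k(s))=\sqrt{s/t}$ from the listed variance and covariance, and verify the limit in Definition~\ref{LRD_definition} with $d=\tfrac12$. The paper's own proof is a one-line display of that limit; your version simply makes explicit the cancellation of the constant $C$ and the justification via the L\'evy-process covariance identity, but the route is identical.
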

\begin{proof}
The correlation function of SPoK satisfies
$$\lim_{t\to\infty} \frac{{\rm Cor}(S^k(t), S^k(s))}{t^{-d}} = \frac{s^{1/2} t^{-1/2}}{t^{-1/2}} =c(s).$$
Hence SPoK exhibits the LRD property.
\end{proof}

\section{Running average of SPoK}
In this section, we introduce and study the new stochastic L\'evy process which is running average of SPoK. 
\begin{definition}
The following  stochastic process defined by taking time-scaled integral of the path of the SPoK,
\begin{align}
    S^{k}_{A}(t) = \frac{1}{t}\int_{0}^{t}{S^{k}(s) ds},
\end{align} 
is called the running average of SPoK.
\end{definition}
\noindent Next  we provide the compound Poisson representation of running average of SPoK.
\begin{proposition} The characteristic function $\phi_{S^{k}_{A}(t)}(u) = \mathbb{E}[e^{iu S^{k}_{A}(t)}]$ of $S^{k}_{A}(t)$ is given by
\begin{align}\label{char_raspok}
    \phi_{S^k_A(t)}(u) = \exp\left[-kt \left\{ \lambda_1 \left(1- \frac{1}{k}\sum_{j=1}^{k}\frac{(e^{iuj}-1)}{iuj} \right)+\lambda_2\left(1- \sum_{j=1}^{k}\frac{(1-e^{-iuj})}{iuj}\right) \right\} \right],\; u\in \mathbb{R}.
\end{align}
\end{proposition}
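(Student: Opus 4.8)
The plan is to compute the characteristic function of $S^k_A(t)$ directly via Lemma 2.2 (the Xia integral formula), exactly as was done for the running average of PPoK in the proof of the characteristic function formula \eqref{cf_ppok}. Recall that $S^k(t)$ is a L\'evy process, so the lemma applies. First I would record the characteristic function of $S^k(1)$ from \eqref{char_spok}, namely $\phi_{S^k(1)}(u) = \exp\bigl(-[k(\lambda_1+\lambda_2) - \lambda_1\sum_{j=1}^k e^{iju} - \lambda_2\sum_{j=1}^k e^{-iju}]\bigr)$, so that $\log \phi_{S^k(1)}(v) = -k(\lambda_1+\lambda_2) + \lambda_1\sum_{j=1}^k e^{ijv} + \lambda_2\sum_{j=1}^k e^{-ijv}$.

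Next I would apply Lemma 2.2 to $X_s = S^k(s)$ and $Y_t = \int_0^t S^k(s)\,ds$, then rescale by $1/t$ since $S^k_A(t) = Y_t/t$: writing $\phi_{S^k_A(t)}(u) = \phi_{Y(t)}(u/t)$, the lemma gives
\[
\phi_{S^k_A(t)}(u) = \exp\left(t\int_0^1 \log\phi_{S^k(1)}(uz)\,dz\right).
\]
Substituting the expression for $\log\phi_{S^k(1)}$ and integrating term by term over $z\in[0,1]$, the constant term contributes $-tk(\lambda_1+\lambda_2)$, while each exponential yields $\int_0^1 e^{\pm ijuz}\,dz = \frac{e^{\pm iju}-1}{\pm iju}$. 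Collecting terms gives
\[
\phi_{S^k_A(t)}(u) = \exp\left(-tk(\lambda_1+\lambda_2) + t\lambda_1\sum_{j=1}^k \frac{e^{iju}-1}{iju} + t\lambda_2\sum_{j=1}^k \frac{e^{-iju}-1}{-iju}\right),
\]
which, after factoring $kt$ out of the bracket and rewriting $\frac{e^{-iju}-1}{-iju} = \frac{1-e^{-iju}}{iju}$, is exactly \eqref{char_raspok}.

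There is essentially no hard step here — the argument is a direct transcription of the PPoK computation with $\lambda_1\sum e^{iju} + \lambda_2\sum e^{-iju}$ in place of $\lambda\sum e^{iuj}$. The only point requiring a little care is the bookkeeping of signs in the negative-index terms (so that $\lambda_2$'s contribution comes out as $1 - \sum_{j=1}^k \frac{1-e^{-iju}}{iju}$ rather than with a flipped sign), and checking the $u=0$ limit is consistent (both sides equal $1$, since $\frac{e^{iju}-1}{iju}\to 1$). One could alternatively derive the same formula from the compound Poisson representation, mirroring Proposition 3.4, but the Lemma 2.2 route is the shortest and I would present that.
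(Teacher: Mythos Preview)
Your proposal is correct and follows exactly the same route as the paper's own proof, which simply reads ``By using the Lemma 1 to equation \eqref{char_spok} after scaling by $1/t$''; you have merely spelled out the term-by-term integration that the paper leaves implicit. (Note, incidentally, that your computation yields a factor $\tfrac{1}{k}$ in front of the $\lambda_2$-sum as well, matching the $\lambda_1$-term; the asymmetry in the displayed formula \eqref{char_raspok} appears to be a typographical slip in the paper, not an error in your argument.)
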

\begin{proof}
By using the Lemma $1$ to equation \eqref{char_spok} after scaling by $1/t$.
\end{proof}
\begin{remark}
It is easily observable that in equation \eqref{char_raspok} has removable singularity at $u=0$. To remove that singularity we can define $\phi_{S^{k}_{A}(t)}(0) =1$.
\end{remark}
\begin{proposition} Let $Y(t)$ be a compound Poisson process 
\begin{align}
    Y(t) = \sum_{n=1}^{N(t)}J_{n}, 
\end{align}
where $N(t)$ is a Poisson process with rate parameter $k(\lambda_1 + \lambda_2) >0$ and $\{J_{n}\}_{n \geq 1}$ are iid random variables with mixed double uniform distribution function $p_{j}$ which are independent of $N(t)$. Then 
$$ Y(t) \stackrel{law}{=} S^{k}_{A}(t).$$
\end{proposition}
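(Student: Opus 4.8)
The plan is to follow the template of Proposition~\ref{running_average_cp}: write the characteristic function of the compound Poisson process $Y(t)$ in closed form and verify that it coincides with the characteristic function of $S^{k}_{A}(t)$, so that the identity in law follows from uniqueness of characteristic functions. First I would make the jump law explicit. Take $p_{j}$ to be the mixed double uniform density
$$
p(x)=\frac{1}{k(\lambda_{1}+\lambda_{2})}\left(\lambda_{1}\sum_{j=1}^{k}\frac{1}{j}\,\mathbf{1}_{(0,j)}(x)+\lambda_{2}\sum_{j=1}^{k}\frac{1}{j}\,\mathbf{1}_{(-j,0)}(x)\right),
$$
that is, $J_{n}$ is uniform on $(0,j)$ with probability $\lambda_{1}/(k(\lambda_{1}+\lambda_{2}))$ and uniform on $(-j,0)$ with probability $\lambda_{2}/(k(\lambda_{1}+\lambda_{2}))$, for each $j=1,\dots,k$; the $2k$ weights sum to one, so $p$ is a bona fide density. (In the symmetric case $\lambda_{1}=\lambda_{2}$ this is just a uniform mixture of the laws on $(-j,j)$, which explains the name.)

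Next I would compute $\phi_{J}$. Since $\int_{0}^{1}e^{iujz}\,dz=(e^{iuj}-1)/(iuj)$ is the characteristic function of the uniform law on $(0,j)$ and $(1-e^{-iuj})/(iuj)$ that of the uniform law on $(-j,0)$, linearity gives
$$
\phi_{J}(u)=\frac{1}{k(\lambda_{1}+\lambda_{2})}\left(\lambda_{1}\sum_{j=1}^{k}\frac{e^{iuj}-1}{iuj}+\lambda_{2}\sum_{j=1}^{k}\frac{1-e^{-iuj}}{iuj}\right),
$$
with the convention $\phi_{J}(0)=1$ removing the removable singularities (cf.\ the remark after \eqref{char_raspok}). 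Conditioning on $N(t)$, which is Poisson with mean $k(\lambda_{1}+\lambda_{2})t$, the standard compound Poisson formula yields $\phi_{Y(t)}(u)=\exp\!\big(k(\lambda_{1}+\lambda_{2})t\,(\phi_{J}(u)-1)\big)$, and substituting the expression for $\phi_{J}$ gives
$$
\phi_{Y(t)}(u)=\exp\!\left(-t\Big(k(\lambda_{1}+\lambda_{2})-\lambda_{1}\sum_{j=1}^{k}\frac{e^{iuj}-1}{iuj}-\lambda_{2}\sum_{j=1}^{k}\frac{1-e^{-iuj}}{iuj}\Big)\right).
$$
On the other hand, applying the time-integrated Lévy process lemma (the lemma of \cite{Xia2018} recalled in Section~3) to $\phi_{S^{k}(1)}$ from \eqref{char_spok} and rescaling the argument by $1/t$ produces precisely the same exponent; this is exactly \eqref{char_raspok}, and in particular shows that the $\lambda_{2}$-term there should carry the same $1/k$ factor as the $\lambda_{1}$-term. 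Hence $\phi_{Y(t)}=\phi_{S^{k}_{A}(t)}$ on $\mathbb{R}$, and $Y(t)\stackrel{law}{=}S^{k}_{A}(t)$ by uniqueness of characteristic functions.

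The manipulation is essentially routine once the compound Poisson structure of Proposition~\ref{running_average_cp} is extended to account for the negative jumps coming from $N^{k}_{2}$. The only point needing care is recognizing the integrals $\int_{0}^{1}e^{\pm iujz}\,dz$ emerging from the Xia lemma as the characteristic functions of the uniform building blocks, and bookkeeping the $\lambda_{1},\lambda_{2}$ mixture weights so that $\phi_{J}$ is genuinely the characteristic function of a probability measure (which forces the $2k$ weights to sum to one). That is the main, and rather mild, obstacle.
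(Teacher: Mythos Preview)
Your proof is correct and follows essentially the same route as the paper's: identify the jump law as a mixture of uniforms on $(0,j)$ and $(-j,0)$, compute $\phi_{J}$, plug into the compound Poisson formula, and match with \eqref{char_raspok}. Your presentation is in fact more explicit than the paper's, which writes the jump density as $p_{J_1}(x)=\frac{1}{k}\sum_{i=1}^{k}f_{U_i}(x)$ with $f_{U_i}(x)=(1-w)\mathbf{1}_{[-i,0]}(x)+w\mathbf{1}_{[0,i]}(x)$ but never names the weight $w$; your formulation makes clear that $w=\lambda_{1}/(\lambda_{1}+\lambda_{2})$ and that the $f_{U_i}$ must be normalized by $1/i$, and you also correctly flag the missing $1/k$ in the $\lambda_{2}$-term of \eqref{char_raspok}.
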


\begin{proof}
Rearranging the $\phi_{S^{k}_{A}(t)}(t)$,
 $$
 \phi_{S^{k}_{A}(t)}(0) = \exp \left[(\lambda_1 + \lambda_2)kt\left(\frac{\lambda_1}{\lambda_1 + \lambda_2}\frac{1}{k}\sum_{j=1}^{k}\frac{(e^{iuj}-1)}{iuj} + \frac{\lambda_2}{\lambda_1 + \lambda_2}\frac{1}{k}\sum_{j=1}^{k}\frac{(1-e^{-iuj})}{iuj}   -1 \right)\right]
 $$
The random variables $J_{1}$ being a mixed double uniformly distributed has density 
\begin{equation}\label{pmf_X}
    p_{J_1}(x)= \sum_{i=1}^{k}p_{V_{i}}(x)f_{U_{i}}(x) = \frac{1}{k}\sum_{i=1}^{k}f_{U_{i}}(x),
\end{equation}
where $V_{j}$ follows discrete uniform distribution over $(0,k)$ with pmf
$p_{V_{j}}(x)=\mathbb{P}(V_{j}=x) = \frac{1}{k}, \;\; j=1,2,\ldots k,$
and $U_{i}$ be doubly uniform distributed random variables with density 
$$f_{U_{i}}(x) = (1-w)1_{[-i,0]}(x) + w 1_{[0, i]}(x), \;\; -i \leq x \leq i.$$
Further, $0<w<1$ is a weight parameter and $1(\cdot)$ is the indicator function. Here we obtained the characteristic of $J_1$ by using the Fourier transformation of \eqref{pmf_X},
$$\phi_{J_1} (u) =\frac{\lambda_1}{\lambda_1 + \lambda_2}\frac{1}{k}\sum_{j=1}^{k}\frac{(e^{iuj}-1)}{iuj} + \frac{\lambda_2}{\lambda_1 + \lambda_2}\frac{1}{k}\sum_{j=1}^{k}\frac{(1-e^{-iuj})}{iuj}. $$
The characteristic function of $Y(t)$ is
\begin{equation}
    \phi_{Y(t)}(u)=  e^{-k (\lambda_1 =\lambda_2) t (1-\phi_{J_1}(u))},
\end{equation}
putting the characteristic function $\phi_{J_1}(u)$ in the above expression yields the characteristic function of $S^{k}_{A}(t)$, which completes the proof.
\end{proof}
\begin{remark}
The $q$-th order moments of $J_1$ can be calculated by using \eqref{M_char} and also using Taylor series expansion of the characteristic $\phi_{J_1}(u))$, around $0$, such that
$$
\frac{(e^{iuj}-1)}{iuj} = 1+\sum_{r=1}^{\infty}\frac{(iuj)^r}{(r+1)!}\;\;\&\;\; \frac{(1-e^{-iuj})}{iuj} = 1+\sum_{r=1}^{\infty}\frac{(-iuj)^r}{(r+1)!}.
$$
We have $m_{1} = \frac{(k+1)(\lambda_1-\lambda_2)}{4(\lambda_1 +\lambda_2)}$ and $m_{2} = \frac{1}{18}[(k+1)(2k+1)]$. Further, the mean, variance and covariance of running average of SPoK are
\begin{align*}
    \mathbb{E}[S^{k}_{A}(t)]&=\mathbb{E}[N(t)]\mathbb{E}[J_1]=\frac{k(k+1)}{4}(\lambda_1 - \lambda_2)t\\
    {\rm Var}[S^{k}_{A}(t)]& = \mathbb{E}[N(t)]\mathbb{E}[J_1^2]=\frac{1}{18}[k(k+1)(2k+1)] (\lambda_1 + \lambda_2)t\\
    {\rm Cov}[S^{k}_{A}(t), S^{k}_{A}(s)] &= \frac{1}{18}[k(k+1)(2k+1)](\lambda_1-\lambda_2) s -\frac{k^2(k+1)^{2}}{16}(\lambda_1-\lambda_2)^2 s^2.
\end{align*}
\end{remark}
\begin{corollary}
For $\lambda_2 = 0$  the running average of SPoK is same as the running average of PPoK, i.e.
$$\phi_{S^k_A(t)}(u) = \phi_{N^k_A(t)}(u). $$
\end{corollary}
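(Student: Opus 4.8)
The plan is simply to specialize the characteristic function of the running average of SPoK, formula \eqref{char_raspok}, to the degenerate case $\lambda_2 = 0$ and recognize the result as the characteristic function of the running average of PPoK, formula \eqref{cf_ppok}. First I would observe at the level of the processes themselves that when $\lambda_2 = 0$ the second Poisson process of order $k$ is identically zero, $N^k_2(t)\equiv 0$, so that $S^k(t) = N^k_1(t)$ for all $t$ almost surely; consequently $S^k_A(t) = \frac{1}{t}\int_0^t S^k(s)\,ds = \frac{1}{t}\int_0^t N^k_1(s)\,ds = N^k_A(t)$ pathwise, which already gives equality in law and hence equality of characteristic functions. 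This is the conceptual content of the corollary.

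To make the statement self-contained at the level of transforms, the second step is the direct substitution. Putting $\lambda_2 = 0$ in
\[
\phi_{S^k_A(t)}(u) = \exp\left[-kt \left\{ \lambda_1 \left(1- \tfrac{1}{k}\sum_{j=1}^{k}\tfrac{e^{iuj}-1}{iuj} \right)+\lambda_2\left(1- \sum_{j=1}^{k}\tfrac{1-e^{-iuj}}{iuj}\right) \right\} \right]
\]
kills the $\lambda_2$ term and leaves
\[
\phi_{S^k_A(t)}(u) = \exp\left[-kt\,\lambda_1\left(1 - \tfrac{1}{k}\sum_{j=1}^{k}\tfrac{e^{iuj}-1}{iuj}\right)\right] = \exp\left[-t\,\lambda_1\left(k - \sum_{j=1}^{k}\tfrac{e^{iuj}-1}{iuj}\right)\right],
\]
which is exactly \eqref{cf_ppok} with the intensity $\lambda$ there identified with $\lambda_1$. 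Uniqueness of characteristic functions then closes the argument.

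There is essentially no obstacle here; the only point that warrants a word of care is bookkeeping of the intensity parameter, namely that the running average of PPoK in \eqref{cf_ppok} is written with a generic rate $\lambda$, and one must state explicitly that the comparison is with the PPoK running average driven by intensity $\lambda_1$ (the intensity of $N^k_1$). I would therefore phrase the corollary's conclusion as $\phi_{S^k_A(t)}(u) = \phi_{N^k_A(t)}(u)$ with $N^k_A$ built from a PPoK of rate $\lambda_1$, and present the two displays above as the whole proof.
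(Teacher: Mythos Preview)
Your proposal is correct. The paper states this corollary without proof, and your direct substitution of $\lambda_2=0$ into \eqref{char_raspok} followed by comparison with \eqref{cf_ppok} (identifying $\lambda$ there with $\lambda_1$) is exactly the intended one-line verification; the pathwise remark that $N_2^k\equiv 0$ when $\lambda_2=0$ is a nice conceptual supplement but not strictly needed.
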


\begin{corollary}
For $k=1$  this process behave like the running average of Skellam process.
\end{corollary}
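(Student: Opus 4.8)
The plan is to establish the asserted equality in law at the level of characteristic functions and then conclude by their uniqueness. First I would substitute $k=1$ directly into the characteristic function \eqref{char_raspok}. Since every sum $\sum_{j=1}^{k}$ then collapses to its single $j=1$ term, this yields at once
\begin{align*}
\phi_{S^{1}_{A}(t)}(u) = \exp\left[-t\left\{\lambda_{1}\left(1-\frac{e^{iu}-1}{iu}\right) + \lambda_{2}\left(1-\frac{1-e^{-iu}}{iu}\right)\right\}\right].
\end{align*}

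Next I would compute independently the characteristic function of the running average $S_{A}(t)=\frac{1}{t}\int_{0}^{t}S(s)\,ds$ of the ordinary Skellam process $S(t)=N_{1}(t)-N_{2}(t)$. Reading the characteristic function $\phi_{S(1)}(v)=\exp\!\left(-(\lambda_{1}+\lambda_{2}-\lambda_{1}e^{iv}-\lambda_{2}e^{-iv})\right)$ off the MGF of the Skellam process, and applying the Riemann-integral lemma (Lemma~1 of \cite{Xia2018}) to the L\'evy process $S$ together with the extra $1/t$ time-scaling — exactly as in the derivation of \eqref{cf_ppok} — gives
\begin{align*}
\phi_{S_{A}(t)}(u) = \exp\left(t\int_{0}^{1}\log\phi_{S(1)}(uz)\,dz\right) = \exp\left[-t\left\{\lambda_{1}\left(1-\frac{e^{iu}-1}{iu}\right)+\lambda_{2}\left(1-\frac{1-e^{-iu}}{iu}\right)\right\}\right],
\end{align*}
where I have used the elementary integrals $\int_{0}^{1}e^{\pm iuz}\,dz=\frac{e^{\pm iu}-1}{\pm iu}$.

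Comparing the two displays shows that $\phi_{S^{1}_{A}(t)}(u)=\phi_{S_{A}(t)}(u)$ for every $u\in\mathbb{R}$ — with the shared removable singularity at $u=0$ resolved by continuity, both functions equalling $1$ there — and hence, by the uniqueness of characteristic functions, $S^{1}_{A}(t)\stackrel{law}{=}S_{A}(t)$. There is no genuine difficulty in this argument; the only point needing a little care is the bookkeeping of the $1/t$ rescaling when passing from the Riemann-integral lemma (which is stated for $\int_{0}^{t}X_{s}\,ds$) to the running average, and checking that the resulting integrals of $e^{\pm iuz}$ reproduce precisely the $k=1$ specialization of the summands in \eqref{char_raspok}.
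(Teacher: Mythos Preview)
Your argument is correct and is exactly the natural verification the paper has in mind: the corollary is stated without proof in the paper, but the intended justification is precisely the specialization of \eqref{char_raspok} at $k=1$ together with the Riemann-integral lemma applied to the ordinary Skellam process, just as you do. There is nothing to add; your handling of the $1/t$ rescaling and the elementary integrals $\int_{0}^{1}e^{\pm iuz}\,dz$ is accurate, and the removable singularity at $u=0$ is dealt with in the same way the paper notes in Remark~5.1.
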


\begin{corollary}
The ratio of mean and variance of SPoK and running average of SPoK are $1/2$ and $1/3$ respectively.
\end{corollary}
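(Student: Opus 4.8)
The plan is to read the result off directly from the moment formulas already established in Sections~4 and~5, so the proof amounts to forming two quotients. For the mean ratio I would recall that the computation via the pgf of SPoK gives $\mathbb{E}[S^{k}(t)]=\frac{k(k+1)}{2}(\lambda_1-\lambda_2)t$, while the compound Poisson representation $S^{k}_A(t)\stackrel{law}{=}\sum_{n=1}^{N(t)}J_n$ together with the mean formula for a compound Poisson process gives $\mathbb{E}[S^{k}_A(t)]=\mathbb{E}[N(t)]\,\mathbb{E}[J_1]=\frac{k(k+1)}{4}(\lambda_1-\lambda_2)t$. Taking the quotient, the common factors $k(k+1)$, $(\lambda_1-\lambda_2)$ and $t$ cancel and what remains is $\frac{1/4}{1/2}=\frac12$.

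For the variance ratio I would use $\mathrm{Var}[S^{k}(t)]=\frac{1}{6}k(k+1)(2k+1)(\lambda_1+\lambda_2)t$, again obtained from the pgf of SPoK, and $\mathrm{Var}[S^{k}_A(t)]=\mathbb{E}[N(t)]\,\mathbb{E}[J_1^2]=\frac{1}{18}k(k+1)(2k+1)(\lambda_1+\lambda_2)t$, coming from the compound Poisson representation with $\mathbb{E}[J_1^2]=m_2=\frac{1}{18}(k+1)(2k+1)$ and $\mathbb{E}[N(t)]=k(\lambda_1+\lambda_2)t$. Once more the factors $k(k+1)(2k+1)$, $(\lambda_1+\lambda_2)$ and $t$ cancel, leaving $\frac{1/18}{1/6}=\frac13$.

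There is essentially no obstacle: all the substance lies in the moment computations carried out earlier, and this corollary merely records that their ratios are universal constants, independent of $k$, of $t$ and of the intensities $\lambda_1,\lambda_2$. The only point worth stating explicitly is that the mean ratio presupposes $\lambda_1\neq\lambda_2$ so that the denominator $\mathbb{E}[S^{k}(t)]$ is nonzero (when $\lambda_1=\lambda_2$ both means vanish identically and only the variance statement is meaningful), and that this is precisely the same cancellation phenomenon already recorded for the Poisson process of order $k$ in the Remark at the end of Section~3.
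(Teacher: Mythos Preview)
Your proposal is correct and matches the paper's approach: the corollary is stated without proof in the paper, being an immediate consequence of the mean and variance formulas for $S^{k}(t)$ (Section~4) and for $S^{k}_{A}(t)$ (Remark~5.1), exactly as you compute. Your added observation that the mean ratio requires $\lambda_1\neq\lambda_2$ is a valid caveat the paper leaves implicit.
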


\section{Time-changed Skellam process of order K}
We consider time-changed SPoK, which can be obtained by subordinating SPoK $S^{k}(t)$ with the independent L\'evy subordinator $D_{f}(t)$ satisfying 
$\mathbb{E}[D_{f}(t)]^c < \infty$ for all $c>0$. The time-changed SPoK is defined by
$$
Z_{f}(t) = S^{k}(D_{f}(t)),\;\; t\geq 0.
$$
Note that the stable subordinator doesn't satisfy the condition $\mathbb{E}[D_{f}(t)]^c < \infty$.
The MGF of time-changed SPoK $Z_{f}(t)$ is given by
$$
\mathbb{E}[e^{\theta Z_{f}(t)}] = e^{-t f(k(\lambda_1 + \lambda_2) -\lambda_1\sum_{j=1}^{k}e^{\theta j} -\lambda_2 \sum_{j=1}^{k}e^{-\theta j})}.
$$
\begin{theorem}
The pmf $H_{f}(t) = \mathbb{P} (Z_{f}(t) = m)$ of time-changed SPoK is given by
\begin{equation}\label{pmf_tcspok}
    H_{f}(t) = \sum_{x=\max(0, -m)}^{\infty} \frac{(k \lambda_1)^{m+x} (k \lambda_2)^{x} }{(m+x)! x!} \mathbb{E}[e^{-k(\lambda_1 +\lambda_2)D_{f}(t)} D_{f}^{2m+x}(t)],\; m \in \mathbb{Z}.
\end{equation}
\end{theorem}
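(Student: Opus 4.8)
The plan is to compute the pmf by conditioning on the value of the subordinator $D_f(t)$ and using the already-established marginal pmf of the SPoK. First I would write, by the law of total probability and independence of $S^k$ and $D_f$,
\[
H_f(t) = \mathbb{P}(Z_f(t)=m) = \int_0^\infty \mathbb{P}(S^k(s)=m)\, \mathbb{P}(D_f(t)\in ds) = \mathbb{E}\big[R_m(D_f(t))\big],
\]
where $R_m(\cdot)$ is the marginal pmf of the SPoK given in \eqref{pmf_spok}.

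Next I would substitute the closed form \eqref{pmf_spok}, namely
\[
R_m(s) = e^{-ks(\lambda_1+\lambda_2)}\left(\tfrac{\lambda_1}{\lambda_2}\right)^{m/2} I_{|m|}(2ks\sqrt{\lambda_1\lambda_2}),
\]
and then expand the modified Bessel function using its series \eqref{Modi_Bessel}: writing $z = 2ks\sqrt{\lambda_1\lambda_2}$ and $|m|$ for the index, the term $(\lambda_1/\lambda_2)^{m/2}(z/2)^{2n+|m|}$ collapses the half-integer powers, producing $(k\lambda_1)^{\,n+\text{something}}(k\lambda_2)^{\,n+\text{something}}\,s^{2n+|m|}$. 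One checks that for $m\ge 0$ the exponent of $s$ is $2n+m$ with coefficient $(k\lambda_1)^{n+m}(k\lambda_2)^n/((n+m)!\,n!)$, and for $m<0$ one gets $2n+|m|$ with the roles of $\lambda_1,\lambda_2$ swapped; both cases are summarised by letting $x$ range from $\max(0,-m)$ to $\infty$ and using the exponent $2m+x$ with coefficient $(k\lambda_1)^{m+x}(k\lambda_2)^x/((m+x)!\,x!)$. This is exactly the index bookkeeping that turns the two cases of \eqref{pmf_spok} into the single sum appearing in \eqref{pmf_tcspok}.

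Finally I would interchange the (absolutely convergent, non-negative-term) series with the expectation over $D_f(t)$, which is justified by Tonelli's theorem since every term is non-negative, and pull the factor $e^{-k(\lambda_1+\lambda_2)D_f(t)}D_f(t)^{2m+x}$ inside the expectation, giving
\[
H_f(t) = \sum_{x=\max(0,-m)}^{\infty} \frac{(k\lambda_1)^{m+x}(k\lambda_2)^{x}}{(m+x)!\,x!}\,\mathbb{E}\big[e^{-k(\lambda_1+\lambda_2)D_f(t)}D_f(t)^{2m+x}\big].
\]
The main obstacle is purely the combinatorial reindexing in the second step: one must be careful that the substitution $s\mapsto D_f(t)$ is legitimate (it is, by conditioning, since $R_m$ is a genuine pmf for each fixed $s$) and that the half-integer powers of $\lambda_1/\lambda_2$ cancel correctly against the Bessel-series powers, separately handling $m\ge 0$ and $m<0$; the interchange of sum and expectation and the handling of the $\mathbb{E}[D_f(t)^c]<\infty$ hypothesis (which guarantees each term is finite) are then routine.
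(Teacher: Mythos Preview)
Your proposal is correct and follows essentially the same route as the paper: condition on the subordinator, substitute the SPoK pmf $R_m$, expand the modified Bessel function via its power series, reindex, and interchange the series with the integral/expectation. In fact you supply more justification (Tonelli, the case split for the reindexing) than the paper's own proof, which simply writes down the four-line chain of equalities.
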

\begin{proof} Let $h_{f}(x,t)$ be the probability density function of L\'evy subordinator.  Using conditional argument
\begin{align*}
    H_{f}(t) &= \int_{0}^{\infty}R_{m}(y) h_{f}(y,t) dy\\
    & = \int_{0}^{\infty}e^{-ky(\lambda_1+\lambda_2)}{\left(\frac{\lambda_1}{\lambda_2}\right)}^{m/2}I_{|m|}(2yk\sqrt{\lambda_1 \lambda_2})h_{f}(y,t) dy\\
    &=\sum_{x=\max(0, -m)}^{\infty} \frac{(k \lambda_1)^{m+x} (k \lambda_2)^{x} }{(m+x)! x!} \int_{0}^{\infty} e^{-k(\lambda_1 +\lambda_2)y} y^{2m+x} h_{f}(y,t) dy\\
    & = \sum_{x=\max(0, -m)}^{\infty} \frac{(k \lambda_1)^{m+x} (k \lambda_2)^{x} }{(m+x)! x!} \mathbb{E}[e^{-k(\lambda_1 +\lambda_2)D_{f}(t)} D_{f}^{2m+x}(t)].
\end{align*}
\end{proof}

\begin{proposition}
The state probability $H_{f}(t)$ of time-changed SPoK satisfies the normalizing condition
$$
\sum_{m=-\infty}^{\infty} H_{f}(t) =1.
$$
\end{proposition}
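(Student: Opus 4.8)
The plan is to avoid re-summing the double series in \eqref{pmf_tcspok} directly and instead exploit the conditioning representation already obtained in the proof of the previous theorem. Recall that, writing $h_f(\cdot,t)$ for the density of the subordinator $D_f(t)$, one has $H_f(t)=\int_0^\infty R_m(y)\,h_f(y,t)\,dy$ for every $m\in\mathbb{Z}$. So the first step is simply to sum this identity over all $m\in\mathbb{Z}$.

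The second step is to interchange the summation over $m$ with the integral over $y$; this is legitimate by Tonelli's theorem, since $R_m(y)=\mathbb{P}(S^k(y)=m)\geq 0$ and $h_f(y,t)\geq 0$, so a nonnegative integrand may be summed and integrated in either order. After the interchange one is left with $\int_0^\infty\big(\sum_{m=-\infty}^\infty R_m(y)\big)h_f(y,t)\,dy$. The third step invokes the normalizing condition for SPoK established earlier (the Proposition asserting $\sum_{m=-\infty}^\infty R_m(t)=1$), applied at the argument $y$: the inner sum equals $1$ for every $y\geq 0$. Hence $\sum_{m=-\infty}^\infty H_f(t)=\int_0^\infty h_f(y,t)\,dy=1$, the last equality holding because $h_f(\cdot,t)$ is a probability density (equivalently, $D_f(t)$ is a proper $[0,\infty)$-valued random variable).

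There is essentially no difficult step here: the only point needing a word of care is the interchange of sum and integral, which is immediate from nonnegativity. As an alternative one could work directly from the explicit series \eqref{pmf_tcspok}: swap the $m$- and $x$-summations, perform the sum over $m$ inside the expectation using the Bessel identity $\sum_{y\in\mathbb{Z}}(\theta_1/\theta_2)^{y/2}I_{|y|}(2a\sqrt{\theta_1\theta_2})=e^{a(\theta_1+\theta_2)}$ with $\theta_1=k\lambda_1$, $\theta_2=k\lambda_2$, $a=D_f(t)$, and then use $\mathbb{E}[1]=1$; this route, however, merely re-derives the SPoK normalization and is strictly longer, so I would present the conditioning argument.
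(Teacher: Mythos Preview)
Your proposal is correct and follows essentially the same approach as the paper: both start from the integral representation $H_f(t)=\int_0^\infty R_m(y)\,h_f(y,t)\,dy$, interchange the sum over $m$ with the $y$-integral, collapse the inner sum to $1$, and conclude via $\int_0^\infty h_f(y,t)\,dy=1$. The only cosmetic difference is that the paper writes out $R_m(y)$ explicitly in Bessel form and applies the Bessel summation identity in place, whereas you cite the earlier SPoK normalization proposition directly; you also make the Tonelli justification for the interchange explicit, which the paper leaves implicit.
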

\begin{proof} Using \eqref{pmf_tcspok}, we have
\begin{align*}
    \sum_{m=-\infty}^{\infty} H_{f}(t)& = \sum_{m=-\infty}^{\infty}\int_{0}^{\infty}e^{-ky(\lambda_1+\lambda_2)}{\left(\frac{\lambda_1}{\lambda_2}\right)}^{m/2}I_{|m|}(2yk\sqrt{\lambda_1 \lambda_2})h_{f}(y,t) dy\\
    & = \int_{0}^{\infty} e^{-ky(\lambda_1+\lambda_2)}e^{ky(\lambda_1+\lambda_2)} h_{f}(y,t) dy\\
    & = \int_{0}^{\infty}h_{f}(y,t) dy =1.
\end{align*}
\end{proof}

\noindent The mean and covarience of time changed SPoK are given by,
\begin{align*}
    \mathbb{E}[Z_{f}(t)]&=\frac{k(k+1)}{2}(\lambda_1 -\lambda_2) \mathbb{E}[D_{f}(t)]\\
    {\rm Cov}[Z_{f}(t),Z_{f}(s)] &= \frac{1}{6}[k(k+1)(2k+1)](\lambda_1 +\lambda_2)) \mathbb{E}[D_{f}(s)]+\frac{k^2(k+1)^2}{4}(\lambda_1-\lambda_2)^2 {\rm Var}[D_{f}(s)].
\end{align*}

\section{Space fractional Skellam process and tempered space fractional Skellam process}
In this section, we introduce time-changed Skellam processes where time time-change are stable subordinator and tempered stable subordinator. These processes give the space-fractional version of the Skellam process similar to the time-fractional version of the Skellam process introduced in \cite{Kerss2014}.
\subsection{The space-fractional Skellam process}
In this section, we introduce space-fractional Skellam processes (SFSP). Further, for introduced processes, we study main results such as state probabilities and governing difference-differential equations of marginal PMF.
\begin{definition}[SFSP]
 Let $ N_{1}(t)$ and $N_{2}(t)$ be two independent homogeneous Poison processes with intensities $ \lambda_{1} >0$ and $ \lambda_{2} >0,$ respectively. Let $D_{\alpha_1}(t)$ and $D_{\alpha_2}(t)$ be two independent stable subordinators with indices $\alpha_{1} \in (0,1)$ and $\alpha_{2} \in (0,1)$ respectively. These subordinators are independent of the Poisson processes $ N_{1}(t)$ and $N_{2}(t)$. The subordinated stochastic process
$$ S_{\alpha_1, \alpha_2}(t)= N_{1}(D_{\alpha_1}(t)) - N_{2}(D_{\alpha_2}(t)) $$
is called a SFSP.
\end{definition}
\noindent Next we derive the moment generating function (MGF) of SFSP. We use the expression for marginal (PMF) of SFPP given in \eqref{space-fractional-PMF} to obtain the marginal PMF of SFSP.
\begin{align*}
M_{\theta}(t)=\mathbb{E}[e^{\theta S_{\alpha_1, \alpha_2}(t)}] &= e^{\theta(N_{1}(D_{\alpha_1}(t))- N_{2}(D_{\alpha_2}(t)))}
= e^{-t[\lambda_{1}^{\alpha_{1}}(1-e^{\theta})^{\alpha_{1}}+\lambda_{2}^{\alpha_{2}}(1-e^{-\theta})^{\alpha_{2}}]},\;\; \theta \in \mathbb{R}.
\end{align*}
In the next result, we obtain the state probabilities of the SFSP.
\begin{theorem}
The PMF $H_{k}(t)=\mathbb{P}(S_{\alpha_1, \alpha_2}(t)=k)$ of SFSP is given by 
\begin{align}
H_{k}(t)&=
 \sum_{n=0}^{\infty}\frac{(-1)^k}{n!(n+k)!}\left({}_{1}\psi_{1} \left[\begin{matrix}
 (1, \alpha_1); \\
  (1-n-k, \alpha_1); 
 \end{matrix}(-{\lambda_1}^{\alpha_1} t) \right]\right)
 \left({}_{1}\psi_{1} \left[\begin{matrix}
 (1, \alpha_2); \\
  (1-n, \alpha_2); 
 \end{matrix}(-{\lambda_2}^{\alpha_2} t) \right]\right)\mathbb{I}_{k\geq 0}\nonumber\\
 &+\sum_{n=0}^{\infty}\frac{(-1)^{|k|}}{n!(n+|k|)!}\left({}_{1}\psi_{1} \left[\begin{matrix}
 (1, \alpha_1); \\
  (1-n, \alpha_1); 
 \end{matrix}(-{\lambda_1}^{\alpha_1} t) \right]\right)
 \left({}_{1}\psi_{1} \left[\begin{matrix}
 (1, \alpha_2); \\
  (1-n-|k|, \alpha_2); 
 \end{matrix}(-{\lambda_2}^{\alpha_2} t) \right]\right)\mathbb{I}_{k< 0}
\end{align}
for $k\in \mathbb{Z}$.
\end{theorem}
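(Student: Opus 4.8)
The plan is to use the fact that $S_{\alpha_1,\alpha_2}(t)$ is, by construction, the difference of the two \emph{independent} space-fractional Poisson processes $N_{\alpha_1}(t)=N_1(D_{\alpha_1}(t))$ and $N_{\alpha_2}(t)=N_2(D_{\alpha_2}(t))$, whose marginal PMFs are recorded in \eqref{space-fractional-PMF}. The argument runs parallel to the proof of \eqref{pmf_spok}: one writes the PMF of a difference of independent $\mathbb{Z}_{\ge 0}$-valued random variables as a convolution and then substitutes the known marginals.

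First I would split into the cases $k\ge 0$ and $k<0$. For $k\ge 0$, since the event $\{S_{\alpha_1,\alpha_2}(t)=k\}$ is the disjoint union over $n\ge 0$ of $\{N_{\alpha_1}(t)=n+k\}\cap\{N_{\alpha_2}(t)=n\}$, independence gives
\[
H_k(t)=\sum_{n=0}^{\infty}\mathbb{P}(N_{\alpha_1}(t)=n+k)\,\mathbb{P}(N_{\alpha_2}(t)=n),
\]
and for $k<0$ the same reasoning with the roles of the two processes interchanged yields $H_k(t)=\sum_{n=0}^{\infty}\mathbb{P}(N_{\alpha_1}(t)=n)\,\mathbb{P}(N_{\alpha_2}(t)=n+|k|)$. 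Since all terms are non-negative, the convolution representation is exact and no rearrangement or convergence issue arises.

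Next, substitute the closed form \eqref{space-fractional-PMF}, namely $\mathbb{P}(N_{\alpha_i}(t)=j)=\frac{(-1)^j}{j!}\,{}_{1}\psi_{1}\!\left[\begin{matrix}(1,\alpha_i);\\(1-j,\alpha_i);\end{matrix}(-\lambda_i^{\alpha_i}t)\right]$, with $j=n+k$ in the first factor and $j=n$ in the second. The numerical prefactor multiplies to $\frac{(-1)^{n+k}}{(n+k)!}\cdot\frac{(-1)^{n}}{n!}=\frac{(-1)^{k}}{n!\,(n+k)!}$ because $(-1)^{2n}=1$, and the two Fox--Wright factors combine exactly into the first displayed sum of the statement; repeating with the $k<0$ decomposition produces the second sum, with $|k|$ replacing $k$ throughout. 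Collecting the two cases with the indicators $\mathbb{I}_{k\ge 0}$ and $\mathbb{I}_{k<0}$ gives the claimed formula.

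I do not expect any genuine obstacle; the only points that need attention are the sign bookkeeping $(-1)^{n+k}(-1)^{n}=(-1)^{k}$ and the remark that the convolution is a true identity (not merely formal) thanks to independence and non-negativity of the terms. As a consistency check one can verify that the resulting series sums to $1$ over $k$ (as in the proof for $R_m(t)$) and that the MGF factorizes as $M_\theta(t)=\mathbb{E}[e^{\theta N_{\alpha_1}(t)}]\,\mathbb{E}[e^{-\theta N_{\alpha_2}(t)}]=e^{-t[\lambda_1^{\alpha_1}(1-e^{\theta})^{\alpha_1}+\lambda_2^{\alpha_2}(1-e^{-\theta})^{\alpha_2}]}$, which is exactly the independence decomposition underlying the proof.
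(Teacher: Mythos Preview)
Your proposal is correct and follows essentially the same approach as the paper's own proof: both use independence of $N_1(D_{\alpha_1}(t))$ and $N_2(D_{\alpha_2}(t))$ to write $H_k(t)$ as the convolution $\sum_{n\ge 0}\mathbb{P}(N_{\alpha_1}(t)=n+k)\,\mathbb{P}(N_{\alpha_2}(t)=n)$ for $k\ge 0$ (and the symmetric expression for $k<0$), then substitute the Fox--Wright form \eqref{space-fractional-PMF}. Your write-up is in fact more detailed than the paper's, which simply states the convolution identity and says ``the result follows''; your explicit sign bookkeeping $(-1)^{n+k}(-1)^n=(-1)^k$ and the remark on non-negativity are welcome additions.
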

\begin{proof} Note that $N_{1}(D_{\alpha_1}(t))$ and  $N_{2}(D_{\alpha_2}(t))$ are independent, hence
\begin{align*}
\mathbb{P}(S_{\alpha_1, \alpha_2}(t)=k) &=  \sum^{\infty}_{n=0}\mathbb{P}(N_{1}(D_{\alpha_1}(t))=n+k)\mathbb{P}(N_{2}(D_{\alpha_2}(t))=n)\mathbb{I}_{k\geq 0}\\
&+ \sum^{\infty}_{n=0}\mathbb{P}(N_{1}(D_{\alpha_1}(t))=n)\mathbb{P}(N_{2}(D_{\alpha_2}(t))=n+|k|)\mathbb{I}_{k<0}.
\end{align*}
Using \eqref{space-fractional-PMF}, the result follows.
\end{proof}

\noindent In the next theorem, we discuss the governing differential-difference equation of the marginal PMF of SFSP.  
\begin{theorem}
The marginal distribution $H_{k}(t)= \mathbb{P}(S_{\alpha_1, \alpha_2}(t)=k)$ of SFSP satisfy  the following differential difference equations
\begin{align}
 \frac{d}{dt}H_{k}(t) &= -\lambda_{1}^{\alpha_{1}} (1-B)^{\alpha_{1}} H_{k}(t)-\lambda_{2}^{\alpha_{2}} (1-F)^{\alpha_{2}} H_{k}(t),\;\;   k \in \mathbb{Z} \\
\frac{d}{dt}H_{0}(t)& = -\lambda_{1}^{\alpha_{1}}H_{0}(t)-\lambda_{2}^{\alpha_{2}}H_{1}(t),
\end{align}
with initial conditions $H_{0}(0)=1$ and $H_{k}(0)=0$ for $k\neq0.$
\end{theorem}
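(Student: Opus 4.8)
The plan is to derive the governing system from the independence of $N_{1}(D_{\alpha_1}(t))$ and $N_{2}(D_{\alpha_2}(t))$ together with the already-established governing equation \eqref{SFPP} for the space-fractional Poisson process, routing the argument through the characteristic function so that the appearance of the backward operator $(1-B)^{\alpha_1}$ and the forward operator $(1-F)^{\alpha_2}$ is transparent. First I would record the transform: replacing $\theta$ by $iu$ in the MGF computed just before the theorem gives
$$\phi(u,t):=\mathbb{E}\!\left[e^{iuS_{\alpha_1,\alpha_2}(t)}\right]=\exp\!\left(-t\left[\lambda_{1}^{\alpha_{1}}(1-e^{iu})^{\alpha_{1}}+\lambda_{2}^{\alpha_{2}}(1-e^{-iu})^{\alpha_{2}}\right]\right),$$
and, since $S_{\alpha_1,\alpha_2}(t)$ is $\mathbb{Z}$-valued, $\phi(u,t)=\sum_{k\in\mathbb{Z}}e^{iuk}H_{k}(t)$ is the Fourier series of $u\mapsto\phi(u,t)$ with coefficients precisely the $H_{k}(t)$.

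\textbf{Differentiate in $t$ and match coefficients.} From $\partial_{t}\phi(u,t)=-\left[\lambda_{1}^{\alpha_{1}}(1-e^{iu})^{\alpha_{1}}+\lambda_{2}^{\alpha_{2}}(1-e^{-iu})^{\alpha_{2}}\right]\phi(u,t)$ I would expand the two factors by the binomial series $(1-e^{iu})^{\alpha_{1}}=\sum_{j\ge0}\binom{\alpha_{1}}{j}(-1)^{j}e^{iju}$ and $(1-e^{-iu})^{\alpha_{2}}=\sum_{j\ge0}\binom{\alpha_{2}}{j}(-1)^{j}e^{-iju}$ (absolutely convergent because $\alpha_{1},\alpha_{2}\in(0,1)$), multiply into $\sum_{m}e^{ium}H_{m}(t)$, and collect the coefficient of $e^{iuk}$. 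That coefficient equals $-\lambda_{1}^{\alpha_{1}}\sum_{j\ge0}\binom{\alpha_{1}}{j}(-1)^{j}H_{k-j}(t)-\lambda_{2}^{\alpha_{2}}\sum_{j\ge0}\binom{\alpha_{2}}{j}(-1)^{j}H_{k+j}(t)$, which by \eqref{Backward_Operator} and the analogous expansion for $(1-F)^{\alpha_{2}}$ is exactly $-\lambda_{1}^{\alpha_{1}}(1-B)^{\alpha_{1}}H_{k}(t)-\lambda_{2}^{\alpha_{2}}(1-F)^{\alpha_{2}}H_{k}(t)$, with $B$ and $F$ now acting on the index $k$. Equating this with the coefficient $\frac{d}{dt}H_{k}(t)$ of $e^{iuk}$ on the left gives the stated equation for every $k\in\mathbb{Z}$ (the $k=0$ line being the corresponding special case), while $H_{0}(0)=1$ and $H_{k}(0)=0$ for $k\ne0$ follow from $S_{\alpha_1,\alpha_2}(0)=0$.

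\textbf{Alternative route and the main obstacle.} The same identity can be obtained directly at the level of the pmf: by independence, $H_{k}(t)=\sum_{m}P^{\alpha_{1}}_{1}(m,t)\,P^{\alpha_{2}}_{2}(m-k,t)$, where $P^{\alpha_{i}}_{i}$ denotes the SFPP pmf with rate $\lambda_{i}$; applying the product rule, substituting \eqref{SFPP} for each factor, and shifting the summation index turns the $B$ acting on the first argument of $P^{\alpha_{1}}_{1}$ into $B$ acting on $k$, and the $B$ acting on the argument of $P^{\alpha_{2}}_{2}$ into $F$ acting on $k$, which is where the forward operator enters. I expect the only real work to be bookkeeping: justifying term-by-term differentiation in $t$, reordering the absolutely convergent double series, and extracting Fourier coefficients, all of which are routine given $\alpha_{1},\alpha_{2}\in(0,1)$ and the rapid decay of $H_{k}(t)$ in $|k|$ inherited from the modified-Bessel/$\psi$-function representations.
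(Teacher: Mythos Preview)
Your approach is correct and is essentially the same route the paper indicates: the paper's entire proof reads ``The proof follows by using probability generating function,'' and your characteristic-function argument is just the substitution $s=e^{iu}$ in that PGF computation, carried out in full detail. One small remark: your parenthetical that the $k=0$ line is ``the corresponding special case'' glosses over the fact that the second displayed equation in the theorem is not literally the $k=0$ instance of the first (plugging $k=0$ into the first equation yields the full series $-\lambda_{1}^{\alpha_{1}}\sum_{j\ge0}\binom{\alpha_{1}}{j}(-1)^{j}H_{-j}(t)-\lambda_{2}^{\alpha_{2}}\sum_{j\ge0}\binom{\alpha_{2}}{j}(-1)^{j}H_{j}(t)$, not just $-\lambda_{1}^{\alpha_{1}}H_{0}(t)-\lambda_{2}^{\alpha_{2}}H_{1}(t)$); this appears to be an artifact of the paper's statement rather than a gap in your method, and your derivation correctly produces the general equation for all $k\in\mathbb{Z}$.
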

\begin{proof}
The proof follows by using probability generating function.
\end{proof}
\begin{remark}
The MGF of the SFSP solves the differential equation
\begin{align}
\frac{dM_{\theta}(t)}{dt} = -M_{\theta}(t)(\lambda_{1}^{\alpha_{1}}(1-e^{\theta})^{\alpha_{1}}+\lambda_{2}^{\alpha_{2}}(1-e^{-\theta})^{\alpha_{2}}).
\end{align}
\end{remark}

\begin{proposition}
The L\'evy density $\nu_{S_{\alpha_1, \alpha_2}}(x)$ of SFSP is given by
\begin{align*}\label{skellam_levy-I}
\nu_{S_{\alpha_1, \alpha_2}} (x) = {\lambda_1}^{\alpha_1}\sum^{\infty}_{n_1=1}(-1)^{n_1+1} {\alpha_1 \choose n_1} \delta_{n_1}(x)+\lambda_2^{\alpha_2}\sum^{\infty}_{n_2=1}(-1)^{n_2+1} {\alpha_2 \choose n_2} \delta_{-n_2}(x).
\end{align*}
\end{proposition}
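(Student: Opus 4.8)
The plan is to use the defining representation of the SFSP as a difference of two \emph{independent} space-fractional Poisson processes. By Definition, $S_{\alpha_1,\alpha_2}(t)=N_{\alpha_1}^{(1)}(t)-N_{\alpha_2}^{(2)}(t)$ with $N_{\alpha_i}^{(i)}(t)=N_i(D_{\alpha_i}(t))$, and each $N_{\alpha_i}^{(i)}$ is an SFPP whose L\'evy density is given by the earlier Proposition on the SFPP, namely $\nu_{N_{\alpha_i}}(x)=\lambda_i^{\alpha_i}\sum_{n\geq 1}(-1)^{n+1}\binom{\alpha_i}{n}\delta_n(x)$. Since a difference of independent L\'evy processes $X-Y$ has characteristic exponent equal to the sum of the exponent of $X$ and that of $-Y$, its L\'evy measure is $\nu_X$ together with the pushforward of $\nu_Y$ under the reflection $x\mapsto -x$. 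Applying this with $X=N_{\alpha_1}^{(1)}$ and $Y=N_{\alpha_2}^{(2)}$ reflects the atoms of the second component onto the negative integers and yields exactly the stated $\nu_{S_{\alpha_1,\alpha_2}}$.

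To present this in the self-contained style of the earlier proofs, I would instead verify the formula directly via the L\'evy--Khintchine formula, as was done for the SFPP and TSFPP. Split the claimed measure as $\nu_{S_{\alpha_1,\alpha_2}}=\nu^{(+)}+\nu^{(-)}$, with $\nu^{(+)}$ supported on the positive integers and $\nu^{(-)}$ on the negative integers, and compute $\int_{\{0\}^c}(e^{i\theta x}-1)\,\nu_{S_{\alpha_1,\alpha_2}}(x)\,dx$ term by term. The $\nu^{(+)}$ contribution is verbatim the computation in the proof of the SFPP L\'evy density and equals $-\lambda_1^{\alpha_1}(1-e^{i\theta})^{\alpha_1}$. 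For the $\nu^{(-)}$ contribution the atoms lie at $x=-n_2$, so $e^{i\theta x}$ evaluates to $e^{-i\theta n_2}$; the same binomial-series identity (now applied at $z=e^{-i\theta}$, using $\sum_{n\geq 0}(-1)^n\binom{\alpha_2}{n}z^n=(1-z)^{\alpha_2}$ together with $\sum_{n\geq 0}(-1)^n\binom{\alpha_2}{n}=0$) gives $-\lambda_2^{\alpha_2}(1-e^{-i\theta})^{\alpha_2}$. Adding the two pieces, the integral equals $-[\lambda_1^{\alpha_1}(1-e^{i\theta})^{\alpha_1}+\lambda_2^{\alpha_2}(1-e^{-i\theta})^{\alpha_2}]$, which is precisely the characteristic exponent of $S_{\alpha_1,\alpha_2}(1)$ read off from the MGF $M_\theta(t)$ computed just above (replace $\theta$ by $i\theta$). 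Since the process is a difference of pure-jump subordinated Poisson processes it has neither a Gaussian part nor a drift, so uniqueness in the L\'evy--Khintchine representation identifies $\nu_{S_{\alpha_1,\alpha_2}}$ with the claimed density.

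The only point that needs care is the admissibility of the series and the interchange of summation and integration. For $\alpha_i\in(0,1)$ one has $(-1)^{n+1}\binom{\alpha_i}{n}>0$ for every $n\geq 1$ and $\sum_{n\geq 1}\bigl|\binom{\alpha_i}{n}\bigr|<\infty$, so $\nu^{(+)}$ and $\nu^{(-)}$ are finite measures supported away from the origin; in particular $\int(|x|\wedge 1)\,\nu_{S_{\alpha_1,\alpha_2}}(dx)<\infty$, and the term-by-term evaluation above is justified by absolute convergence. I expect this bookkeeping, rather than any conceptual difficulty, to be the only real obstacle: the identity is essentially immediate from independence combined with the SFPP L\'evy density already established above.
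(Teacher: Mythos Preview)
Your proposal is correct, and your first paragraph is exactly the paper's argument: it simply invokes the SFPP L\'evy density already established and writes $\nu_{S_{\alpha_1,\alpha_2}}=\nu_{N_{\alpha_1}}+\nu_{-N_{\alpha_2}}$ by independence (the paper states this in one line, without making the reflection $x\mapsto -x$ for the second summand as explicit as you do). Your alternative route---a direct L\'evy--Khintchine verification mirroring the SFPP/TSFPP proofs---is not what the paper does here; it is more self-contained and also supplies the positivity and summability checks on the coefficients $(-1)^{n+1}\binom{\alpha_i}{n}$ that the paper leaves implicit, at the cost of repeating a computation already carried out for the SFPP.
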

\begin{proof} Substituting the L\'evy densities $\nu_{N_{\alpha_1}}(x)$ and $\nu_{N_{\alpha_2}}(x)$ of $N_{1}(D_{\alpha_1}(t))$ and $N_{2}(D_{\alpha_2}(t))$, respectively from the equation \eqref{space_levy}, we obtain
 $$\nu_{S_{\alpha_1, \alpha_2}} (x) = \nu_{N_{\alpha_1}}(x) +  \nu_{N_{\alpha_2}}(x),$$
which gives the desired result.
\end{proof}

\subsection{Tempered space-fractional Skellam process (TSFSP)}
In this section, we present the tempered space-fractional Skellam process (TSFSP). We discuss the corresponding fractional difference-differential equations, marginal PMFs and moments of this process.

\begin{definition}[TSFSP]
The TSFSP is obtained by taking the difference of two independent tempered space fractional Poisson processes. Let $D_{\alpha_1, \mu_1}(t)$, $D_{\alpha_2, \mu_2}(t)$ be two independent TSS (see \cite{Rosinski2007}) and $N_1(t), N_2(t)$ be two independent Poisson processes whcih are independent of TSS. Then the stochastic process 
$$ S^{\mu1, \mu2}_{\alpha_1,\alpha_2}(t) = N_1(D_{\alpha_1, \mu_1}(t)-N_2(D_{\alpha_2, \mu_2}(t))$$
is called the TSFSP.
\end{definition}

\begin{theorem}
The PMF $H_{k}^{\mu1, \mu2}(t)=\mathbb{P}(S^{\mu1, \mu2}_{\alpha_1,\alpha_2}(t)=k)$ is given by
\begin{align}
H_{k}^{\mu1, \mu2}(t)&=\sum_{n=0}^{\infty}\frac{(-1)^{k}}{n!(n+k)! }e^{t(\mu_{1}^{\alpha_{1}}+\mu1^{\alpha_{1}})}\left(\sum_{m=0}^{\infty}\frac{\mu_1^m \lambda_1^{-m}}{m!} {}_{1}\psi_{1} \left[\begin{matrix}
 (1, \alpha_1); \\
  (1-n-k-m, \alpha_1); 
 \end{matrix}(-{\lambda_1}^{\alpha_1} t) \right]\right)\times\nonumber\\
& \left(\sum^{\infty}_{l=0}\frac{{\mu_2}^{l}{\lambda_2}^{-l}}{l!}{}_{1}\psi_{1} \left[\begin{matrix}
 (1, \alpha_2); \\
  (1-l-k, \alpha_2); 
 \end{matrix}(-{\lambda_2}^{\alpha_2} t) \right]\right)
\end{align}
when $k\geq 0$ and similarly for $k<0$,
\begin{align}
H_{k}^{\mu1, \mu2}(t)&=\sum_{n=0}^{\infty}\frac{(-1)^{|k|}}{n!(n+|k|)! }e^{t(\mu_{1}^{\alpha_{1}}+\mu1^{\alpha_{1}})}\left(\sum_{m=0}^{\infty}\frac{\mu_1^m \lambda_1^{-m}}{m!} {}_{1}\psi_{1} \left[\begin{matrix}
 (1, \alpha_1); \\
  (1-n-m, \alpha_1); 
 \end{matrix}(-{\lambda_1}^{\alpha_1} t) \right]\right)\times\nonumber\\
& \left(\sum^{\infty}_{l=0}\frac{{\mu_2}^{l}{\lambda_2}^{-l}}{l!}{}_{1}\psi_{1} \left[\begin{matrix}
 (1, \alpha_2); \\
  (1-l-n-|k|, \alpha_2); 
 \end{matrix}(-{\lambda_2}^{\alpha_2} t) \right]\right).
\end{align}
\end{theorem}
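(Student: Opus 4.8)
The plan is to follow the same route used for the marginal PMF of the SFSP, but with the space-fractional Poisson factors replaced by tempered space-fractional Poisson factors. First I would record that, by construction, $N_1(D_{\alpha_1,\mu_1}(t))$ and $N_2(D_{\alpha_2,\mu_2}(t))$ are two \emph{independent} TSFPPs, the first with parameters $(\lambda_1,\alpha_1,\mu_1)$ and the second with $(\lambda_2,\alpha_2,\mu_2)$. Writing $S^{\mu_1,\mu_2}_{\alpha_1,\alpha_2}(t)$ as their difference and conditioning on the value of the subtracted process, independence gives, for $k\ge 0$,
\[
H^{\mu_1,\mu_2}_k(t)=\sum_{n=0}^{\infty}\mathbb{P}\big(N_1(D_{\alpha_1,\mu_1}(t))=n+k\big)\,\mathbb{P}\big(N_2(D_{\alpha_2,\mu_2}(t))=n\big),
\]
and the mirror identity, with the roles of the two processes exchanged and $|k|$ in place of $k$, for $k<0$.

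Second, I would substitute the closed form \eqref{pmf_tem_space} for the TSFPP marginal PMF into each factor. The first factor becomes $e^{t\mu_1^{\alpha_1}}\frac{(-1)^{n+k}}{(n+k)!}\sum_{m\ge0}\frac{\mu_1^m\lambda_1^{-m}}{m!}\,{}_1\psi_1\!\left[\begin{matrix}(1,\alpha_1);\\(1-n-k-m,\alpha_1);\end{matrix}(-\lambda_1^{\alpha_1}t)\right]$ and the second factor becomes $e^{t\mu_2^{\alpha_2}}\frac{(-1)^{n}}{n!}\sum_{l\ge0}\frac{\mu_2^l\lambda_2^{-l}}{l!}\,{}_1\psi_1\!\left[\begin{matrix}(1,\alpha_2);\\(1-n-l,\alpha_2);\end{matrix}(-\lambda_2^{\alpha_2}t)\right]$. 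Multiplying, the sign factors combine to $(-1)^{2n+k}=(-1)^{k}$, the exponential prefactors combine to $e^{t(\mu_1^{\alpha_1}+\mu_2^{\alpha_2})}$, and the factorials reorganize into $\frac{1}{n!\,(n+k)!}$; pulling the $n$-sum outside then yields exactly the displayed expression. The case $k<0$ is handled identically after swapping the two factors, producing the stated formula with $|k|$.

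Third --- and this is the only genuinely delicate point --- I would justify the rearrangements: the interchange of the $n$-, $m$- and $l$-summations and the term-by-term multiplication of the two series. Since each series in \eqref{pmf_tem_space} converges absolutely (the inner ${}_1\psi_1$ here is entire in its argument and the outer sums represent honest probabilities), Fubini/Tonelli for series applies and all the manipulations are legitimate; I would state this explicitly rather than grind through the estimates. Beyond that, the argument is pure bookkeeping of the parameter shifts $1-n-k\mapsto 1-n-k-m$ and $1-n\mapsto 1-n-l$ inside the ${}_1\psi_1$ symbols, so I do not expect any substantive obstacle.
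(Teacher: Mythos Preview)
Your approach is exactly the one the paper uses: write $\{S^{\mu_1,\mu_2}_{\alpha_1,\alpha_2}(t)=k\}$ as a convolution sum via the independence of the two TSFPPs, then insert the TSFPP marginal PMF \eqref{pmf_tem_space} into each factor and collect terms. Your write-up is in fact more careful than the paper's one-line proof, and your explicit bookkeeping (yielding $e^{t(\mu_1^{\alpha_1}+\mu_2^{\alpha_2})}$ and lower parameter $1-n-l$ in the second ${}_1\psi_1$ for $k\ge0$) would also correct what appear to be typographical slips in the displayed statement.
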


\begin{proof}
Since $N_{1}(D_{\alpha_1, \mu_1}(t))$ and $N_{2}(D_{\alpha_2, \mu_2}(t))$ are independent,
\begin{align*}
\mathbb{P}\left(S^{\mu1, \mu2}_{\alpha_1,\alpha_2}(t)=k\right) &=  \sum^{\infty}_{n=0}\mathbb{P}(N_1(D_{\alpha_1, \mu_1}(t))=n+k)\mathbb{P}(N_2(D_{\alpha_2, \mu_2}(t))=n)\mathbb{I}_{k\geq 0}\\
&+ \sum^{\infty}_{n=0}\mathbb{P}(N_1(D_{\alpha_1, \mu_1}(t))=n)\mathbb{P}(N_2(D_{\alpha_2, \mu_2}(t))=n+|k|)\mathbb{I}_{k<0},
\end{align*}
which gives the marginal PMF of TSFPP by using \eqref{pmf_tem_space}. 
\end{proof}

\begin{remark}
We use this expression to calculate the marginal distribution of TSFSP. The MGF is obtained by using the conditioning argument. Let $f_{\alpha, \mu}(x,t)$ be the density function of $D_{\alpha, \mu}(t)$. Then
\begin{align}\label{MGF_temp}
\mathbb{E}[e^{\theta N(D_{\alpha, \mu}(t))}] &= \int^{\infty}_{0}\mathbb{E}[e^{\theta N(u)}]f_{\alpha, \mu}(u,t)du
 =e^{-t\{(\lambda(1-e^{\theta})+\mu)^{\alpha}-\mu^{\alpha}\}}.
\end{align}
Using \eqref{MGF_temp}, the MGF of TSFSP is 
$$
 \mathbb{E}[e^{\theta S^{\mu_1, \mu_2}_{\alpha_1, \alpha_2}(t)}]=\mathbb{E}\left[e^{\theta N_{1}(D_{\alpha_1, \mu_1}(t))}\right]\mathbb{E}\left[e^{\theta N_{2}(D_{\alpha_2, \mu_2}(t))}\right] = e^{-t[\{(\lambda_1(1-e^{\theta})+\mu_1)^{\alpha_1}-\mu_1^{\alpha_1}\}+\{(\lambda_2(1-e^{\theta})+\mu_2)^{\alpha_2}-\mu_2^{\alpha_2}\}]}.
$$
\end{remark}

\begin{remark}
We have $\mathbb{E}[S^{\mu1, \mu2}_{\alpha_1,\alpha_2}(t)] =t(\alpha_1 \mu_1^{\alpha_1 -1}-\alpha_2 \mu_2^{\alpha_2 -1}).$ Further, the covariance of TSFSP can be obtained by using \eqref{vari_tem} and 
\begin{align*}
{\rm Cov}\left[S^{\mu_1, \mu_2}_{\alpha_1, \alpha_2}(t), S^{\mu_1, \mu_2}_{\alpha_1, \alpha_2}(s)\right] & =
{\rm Cov}[N_{1}(D_{\alpha_1, \mu_1}(t)), N_{1}(D_{\alpha_1, \mu_1}(s))]
+ {\rm Cov}[N_{2}(D_{\alpha_2, \mu_2}(t)), N_{2}(D_{\alpha_2,\mu_2}(s))]\\
&= {\rm Var}(N_{1}(D_{\alpha_1, \mu_1}(\min(t,s)))+ {\rm Var}(N_{2}(D_{\alpha_2, \mu_2}(\min(t,s))).
\end{align*}
\end{remark}

\begin{proposition}
The L\'evy density $\nu_{S^{\mu1, \mu2}_{\alpha_1,\alpha_2}}(x)$ of TSFSP is given by
\begin{align*}\label{skellam_levy-I}
\nu_{S^{\mu1, \mu2}_{\alpha_1,\alpha_2}}(x) &= \sum_{n_1=1}^{\infty}\mu_1^{\alpha_1-n_1}{\alpha_1 \choose n_1}{\lambda_1}^{n_1} \sum_{l=1}^{n_1} {n_1 \choose l_1}(-1)^{l_1+1} \delta_{l_1}(x)\\
& +\sum_{n_2=1}^{\infty}{\mu_2}^{\alpha_2-n_2}{\alpha_2 \choose n_2}{\lambda_2}^{n_2} \sum_{l_2=1}^{n_2} {n_2 \choose l_2}(-1)^{l_2+1} \delta_{l_2}(x), \; \mu_{1}, \; \mu_{2} > 0.
\end{align*}
\end{proposition}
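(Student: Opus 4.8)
The plan is to exploit that $S^{\mu_1,\mu_2}_{\alpha_1,\alpha_2}(t)=N_1(D_{\alpha_1,\mu_1}(t))-N_2(D_{\alpha_2,\mu_2}(t))$ is a difference of two \emph{independent} L\'evy processes, each of which is a tempered space-fractional Poisson process of Section~2. For a difference $X(t)-Y(t)$ of independent L\'evy processes the characteristic exponent is the sum of the exponent of $X$ and the exponent of $-Y$, hence the L\'evy measure is the sum of $\nu_X$ and the reflected measure $\tilde\nu_Y$, obtained from $\nu_Y$ under $x\mapsto -x$. So it suffices to recall the L\'evy density of the TSFPP from \eqref{levy_temp} and apply it once with parameters $(\alpha_1,\mu_1,\lambda_1)$ and once, after the reflection $x\mapsto -x$, with parameters $(\alpha_2,\mu_2,\lambda_2)$.

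Concretely, I would first write $\nu_{N_{\alpha_1,\mu_1}}$ directly from \eqref{levy_temp}, which furnishes the first double series, with atoms $\delta_{l_1}(x)$ at positive integers. Next, the reflected process $-N_2(D_{\alpha_2,\mu_2}(t))$ has L\'evy density obtained by replacing each $\delta_{l_2}(x)$ in \eqref{levy_temp} (taken with $(\alpha,\mu,\lambda)=(\alpha_2,\mu_2,\lambda_2)$) by $\delta_{-l_2}(x)$; this is checked exactly as in the proof of the TSFPP L\'evy density, by inserting the candidate into the L\'evy--Khintchine integral $\int_{\{0\}^c}(e^{i\theta x}-1)\nu(x)\,dx$ and matching the characteristic exponent $(\mu_2+\lambda_2(1-e^{-i\theta}))^{\alpha_2}-\mu_2^{\alpha_2}$ read off from the MGF of the second factor. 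Summing the two densities gives the asserted expression (the atoms of the second block sitting at the negative integers $-l_2$).

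As a self-contained alternative I would plug the whole candidate density into $\int_{\{0\}^c}(e^{i\theta x}-1)\nu(x)\,dx$ and use $\sum_{l=0}^{n}\binom{n}{l}(-1)^{l+1}e^{i\theta l}=-(1-e^{i\theta})^n$ together with the Newton binomial series $\sum_{n\ge 0}\binom{\alpha}{n}\mu^{\alpha-n}\bigl(\lambda(1-e^{i\theta})\bigr)^n=(\mu+\lambda(1-e^{i\theta}))^{\alpha}$ for the first block, and the conjugate identities in $e^{-i\theta}$ for the second; this recovers $-\bigl[(\mu_1+\lambda_1(1-e^{i\theta}))^{\alpha_1}-\mu_1^{\alpha_1}\bigr]-\bigl[(\mu_2+\lambda_2(1-e^{-i\theta}))^{\alpha_2}-\mu_2^{\alpha_2}\bigr]$, i.e.\ minus the characteristic exponent of the TSFSP coming from the MGF in the preceding remark, as required by the L\'evy--Khintchine representation.

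The only genuinely delicate point is justifying the rearrangement and absolute convergence of the double series for $\mu_1,\mu_2>0$: one needs $\sum_{n\ge 1}\bigl|\binom{\alpha_i}{n}\bigr|\mu_i^{\alpha_i-n}(2\lambda_i)^n<\infty$, which holds precisely because of the tempering (and fails in the untempered limit $\mu_i\to 0$). Granting this, interchanging the finite $l$-sum with the $n$-sum is immediate and the computation closes; everything else duplicates the bookkeeping already carried out for the SFSP and the TSFPP.
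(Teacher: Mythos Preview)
Your approach is essentially the same as the paper's: obtain the L\'evy density of the TSFSP by summing the (reflected) L\'evy densities of the two independent TSFPP components, invoking \eqref{levy_temp}. The paper's proof is a single sentence to this effect, so your version is considerably more detailed; the alternative direct L\'evy--Khintchine verification you sketch is not in the paper but is a natural sanity check.

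One point worth flagging: you correctly insist that the second block has atoms at the \emph{negative} integers $-l_2$, matching what happens for the SFSP in the preceding subsection. The proposition as stated in the paper writes $\delta_{l_2}(x)$ rather than $\delta_{-l_2}(x)$, and the paper's one-line proof likewise just ``adds'' $\nu_{N_{\alpha_1,\mu_1}}$ and $\nu_{N_{\alpha_2,\mu_2}}$ without reflecting the second; this appears to be a typo, and your version is the right one. Your convergence remark is more care than the paper takes (the paper does not discuss it), though your claim that absolute convergence holds for all $\mu_i>0$ ``precisely because of the tempering'' is slightly optimistic: the Newton series you use requires $|\lambda_i(1-e^{\pm i\theta})|<\mu_i$, so the rearrangement is only unproblematic in that regime.
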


\begin{proof} By adding L\'evy densities $\nu_{N_{\alpha_1, \mu_1}}(x)$ and $\nu_{N_{\alpha_2, \mu_2}}(x)$ of $N_{1}(D_{\alpha_1, \mu_1}(t))$ and $N_{2}(D_{\alpha_2, \mu_2}(t))$ respectively from the equation \eqref{levy_temp}, which leads to

 $$\nu_{S^{\mu1, \mu2}_{\alpha_1,\alpha_2}}(x) = \nu_{N_{\alpha_1, \mu_1}}(x) + \nu_{N_{\alpha_2, \mu_2}}(x).$$
\end{proof}

\subsection{Simulation of SFSP and TSFSP}
We present the algorithm to simulate the sample trajectories for SFSP and TSFSP. We use Python 3.7 and its libraries \textit{Numpy} and \textit{ Matplotlib} for the simulation purpose.\\

\noindent{\bf Simulation of SFSP:}\\
\textbf{Step-1}: generate independent and uniformly distributed in $[0, 1]$ rvs $U$, $V$ for fix values of parameters;\\
\textbf{Step-2}: generate the increments of the $\alpha$-stable subordinator $D_{\alpha}(t)$ (see \cite{Cahoy2010}) with pdf $f_{\alpha}(x, t)$, using the relationship $D_{\alpha}(t + dt) - D_{\alpha}(t) \stackrel{d}{=} D_{\alpha}(dt) \stackrel{d}= (dt)^\frac{1}{\alpha} D_{\alpha}(1)$, where
$$   
D_{\alpha}(1) = \frac{\sin(\alpha \pi U)[\sin((1-\alpha)\pi U)]^{1/\alpha -1}}{[\sin(\pi U)]^{1/\alpha} |\log V|^{1/\alpha -1}};
$$
\textbf{Step-3}: generate the increments of Poisson distributed rv $N(D_{\alpha}(dt))$ with parameter $\lambda (dt)^{1/\alpha} D_{\alpha}(1)$;\\
\textbf{Step-4}:  cumulative sum of increments gives the space fractional Poisson process $N(D_{\alpha}(t))$  sample trajectories;\\
\textbf{Step-5}: generate $N_{1}(D_{\alpha_1}(t))$, $N_{2}(D_{\alpha_2}(t))$ and subtract these to get the SFSP $S_{\alpha_1, \alpha_2}(t)$.\\

\noindent We next present the algorithm for generating the sample trajectories of TSFSP.\\

\noindent{\bf Simulation of TSFSP:}\\
Use the first two steps of previous algorithm for generating the increments of $\alpha$-stable subordinator $D_{\alpha}(t)$.\\
\textbf{Step-3}: for generating the increments of TSS $D_{\alpha, \mu}(t)$ with pdf $f_{\alpha, \mu}(x, t)$, we use the following steps called ``acceptance-rejection method";\\
\textbf{(a)} generate the stable random variable $D_{\alpha}(dt)$;\\
\textbf{(b)} generate uniform $(0, 1)$ rv $W$ (independent from $D_{\alpha}$);\\
\textbf{(c)} if $W \leq e^{-\mu D_{\alpha}(dt)}$, then $D_{\alpha, \mu}(dt) = D_{\alpha}(dt)$ (``accept"); otherwise go back to (a) (``reject").
Note that, here we used that $f_{\alpha, \mu}(x, t) = e^{-\mu x + \mu^{\alpha}t} f_{\alpha}(x, t),$ which implies $\frac{f_{\alpha, \mu}(x, t)(x,dt)}{cf_{\alpha}(x,dt)} = e^{-\mu x}$ for $c = e^{{\mu }^{\alpha} dt}$ and the ratio is bounded between $0$ and $1$;\\
\textbf{step-4}: generate Poisson distributed rv $N(D_{\alpha, \mu}(dt))$ with parameter $\lambda D_{\alpha, \mu}(dt)$ \\
\textbf{step-5}: cumulative sum of increments gives the tempered space fractional Poisson process $N(D_{\alpha, \mu}(t))$ sample trajectories;\\
\textbf{step-6}: generate $N_{1}(D_{\alpha_1, \mu_1}(t))$, $N_{2}(D_{\alpha_2, \mu_2}(t))$, then take difference of these to get  the sample paths of the TSFSP$S_{\alpha_1, \alpha_2}^{\mu_1, \mu_2}(t)$.

\begin{figure}[ht!]
        \centering
        \includegraphics[width=0.45\textwidth, height=0.3\textheight]{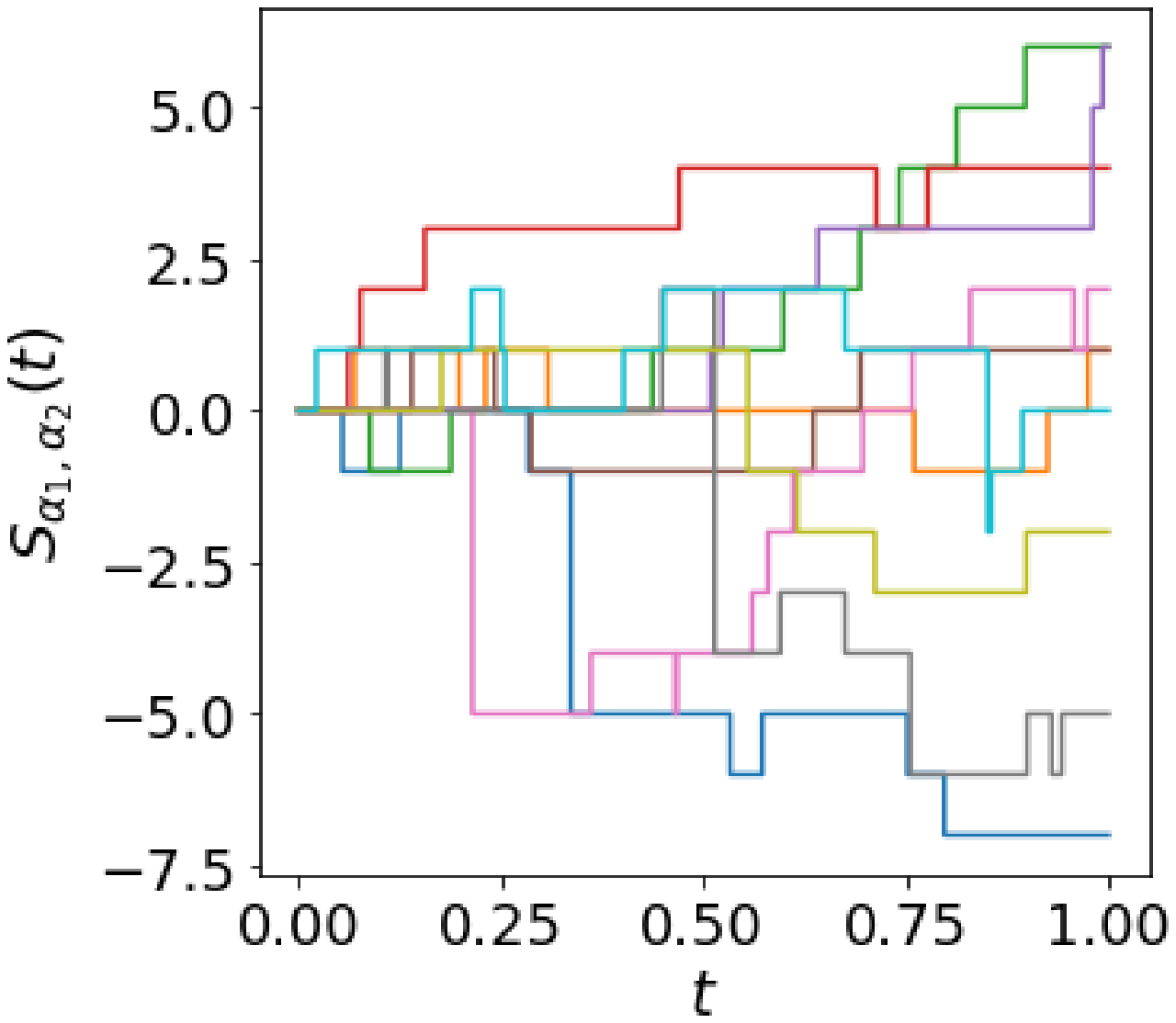}
       \includegraphics[width=0.45\textwidth, height=0.3\textheight]{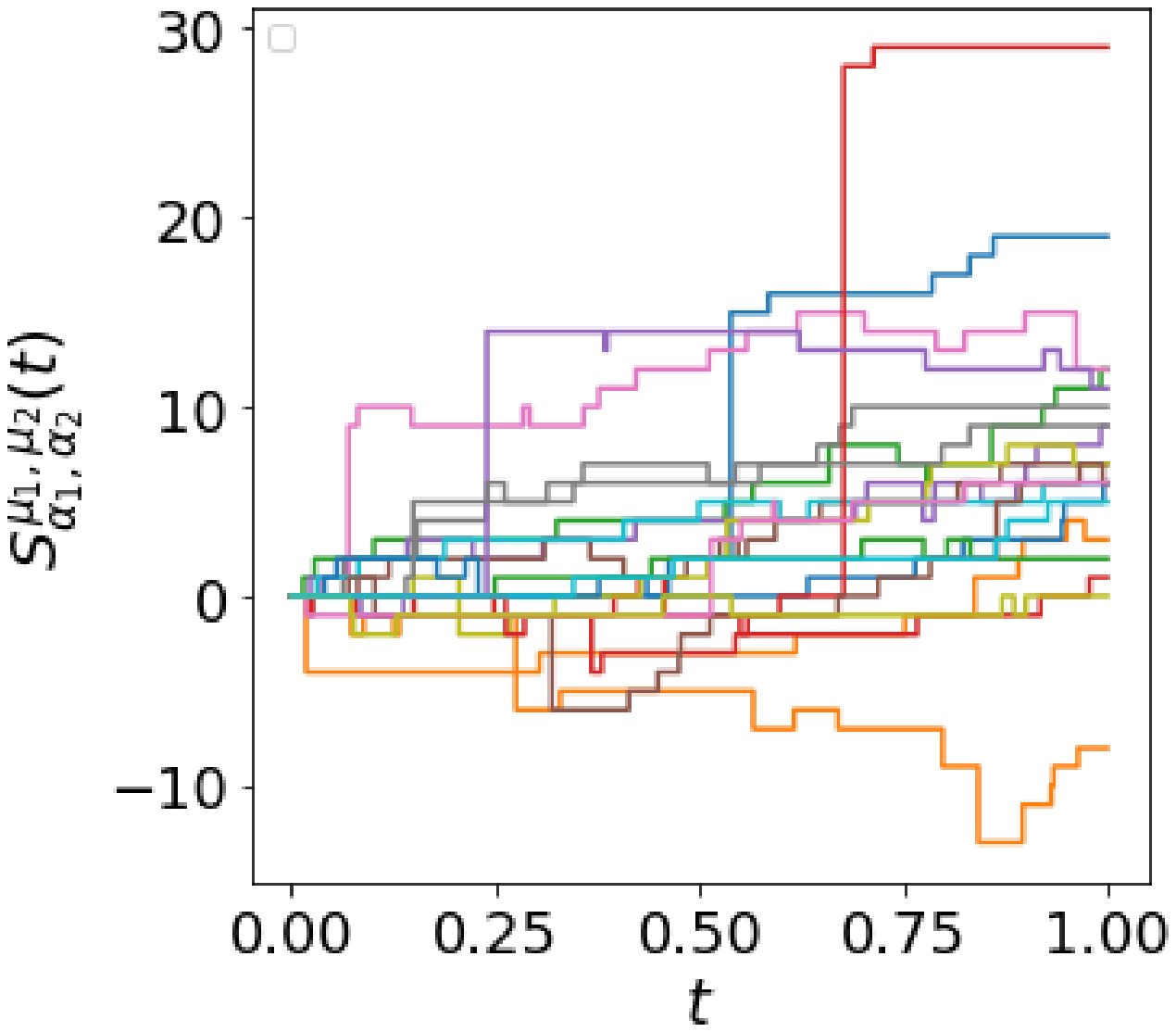}
			\caption{The sample trajectories of SFSP and TSFSP}
\end{figure}

\noindent {\bf Acknowledgments:} 
NG would like to thank Council of Scientific and Industrial Research(CSIR), India, for the award of a research fellowship.  

\noindent

\end{document}